\newtheorem{theorem}{Theorem}[section]
\newtheorem{proposition}[theorem]{Proposition}
\newtheorem{lemma}[theorem]{Lemma}
\newtheorem{corollary}[theorem]{Corollary}
\newtheorem{problem}[theorem]{Problem}
\newtheorem{prob*}{Problem}
\newtheorem{question}[theorem]{Question}
\newtheorem*{theorem*}{Theorem}
\theoremstyle{definition}
\newtheorem{example}[theorem]{Example}
\newtheorem{remark}[theorem]{Remark}
\title[Finite subgroups of $\PGL_2(K)$ from configurations of skew lines]{Finite subgroups of $\mathrm{PGL}_2(K)$ arising from configurations of skew lines in $\mathbb P^3_K$}
\author{Giuseppe Favacchio}
\address[G. Favacchio]{Dipartimento di Ingegneria, Universita degli studi di Palermo, Viale delle Scienze,
90128 Palermo, Italy}
\email{giuseppe.favacchio@unipa.it}
\date{}
\newcommand{\GL}{\mathrm{GL}}  
\newcommand{\PGL}{\mathrm{PGL}}  
\newcommand{\PSL}{\mathrm{PSL}}  
\newcommand{\Aut}{\mathrm{Aut}}
\DeclareMathOperator{\Span}{Span}
\DeclareMathOperator{\lcm}{lcm}
\newcommand{\PP}{ \ensuremath{\mathbb{P}}}
\keywords{Skew lines, half-grid configurations, collinearly complete sets,  finite subgroups of PGL2, group actions on projective space}
\subjclass[2020]{14N20, 14L30, 20H20}
\begin{document}
\begin{abstract}
We study finite groups arising from configurations of pairwise skew lines in $\mathbb{P}^3_K$. To such a configuration $\mathcal{L}$ one associates a group $G_{\mathcal{L}}\subset \mathrm{PGL}_2(K)$ acting on each line, and we investigate which finite subgroups of $\mathrm{PGL}_2(K)$ can occur in this way.

Our main tool is a matrix description of skew lines in $\mathbb{P}^3_K$, which gives explicit generators for $G_{\mathcal{L}}$ in terms of matrices in $\mathrm{GL}_2(K)$. In the abelian case, we prove that the relevant matrices are simultaneously upper triangular and obtain explicit families realizing cyclic groups and elementary abelian $p$-groups.  In the non-abelian case, we show that, in non-modular characteristic, no dihedral group $D_n$ with
$n\ge 3$ can occur, while configurations realizing $A_4$, $S_4$, and $A_5$ are constructed explicitly. 

These results also yield new examples of point sets whose general projection is a complete intersection.
\end{abstract}
\maketitle

\section{Introduction and Background}
Let $K$ be an algebraically closed field and let $\PP^3_K=\PP^3$ denote the three-dimensional projective space over $K$.  We begin by recalling the setup introduced in \cite[Section 2]{politus3}.

Consider three distinct skew lines $L_1,L_2,L_3 \subset \PP^3$. One can naturally associate a linear isomorphism
\[
  f_{123}\colon L_1 \longrightarrow L_2
\]
defined geometrically as follows: for each $p\in L_1$, consider the plane spanned by $p$ and $L_3$; this plane intersects $L_2$ in a unique point, which we denote by $q=f_{123}(p)$.

This construction extends in a straightforward way to any finite collection of skew lines. Let ${\mathcal L}=\{L_1,\ldots,L_s\}\subseteq \PP^3$ be a set of $s\geq 3$ distinct skew lines.
Each ordered triple $(L_{i_1},L_{i_2},L_{i_3})$ of lines from $\mathcal L$ 
determines a map
\[
  f_{i_1 i_2 i_3}\colon L_{i_1} \longrightarrow L_{i_2}
\]
defined exactly as above.

Compositions of these maps can also be considered whenever the target of one map coincides with the domain of the next: for instance,
  $f_{\ell m n} \circ f_{i j k}$
is defined whenever $j = \ell$. In general, however, such compositions need not themselves arise from another triple of lines in $\mathcal L$, that is, $f_{j m n}\circ f_{i j k}\colon L_i\to L_m$ need not be of the form $f_{i m  r}$ for any $L_r\in {\mathcal L}$.

In \cite[Definition 2.1.2]{politus3} a groupoid structure on $\mathcal L=\{L_1,\ldots, L_s\}$ is introduced, denoted $C_{\mathcal L}$. The objects of $C_{\mathcal L}$ are the lines $L_1,\ldots,L_s$, and the arrows are all the maps $f_{ijk}$ together with all their possible compositions.

For each $L_i$, the set of endomorphisms
\[
  G_i({\mathcal L}) = {\rm Hom}_{C_{\mathcal L}}(L_i,L_i)
\]
forms a group under composition, naturally viewed as a subgroup of the projective linear group ${\rm Aut}(L_i)\cong \PGL_2(K)$. Since all such groups $G_i({\mathcal L})$ are (noncanonically) isomorphic for $i=1,\ldots,s$, we write $G_{\mathcal L}$ for this common group and refer to it as the group of the groupoid.

Given a point $p\in L_i\in \mathcal L$ we can apply to it all morphisms in $C_{\mathcal L}$ whose source is $L_i$. The collection of all images of $p$ under such maps is called its \emph{orbit} and denoted by $[p]_{\mathcal L}$. 

A natural question is: how can we recognize sets that are unions of such orbits geometrically, without explicitly referring to the maps $f_{ijk}$? To answer this, \cite[Definition 2.1.10]{politus3} introduces the notion of collinear completeness.

Let $Z$ be a set (nonempty but not necessarily finite) of points contained in a finite union of lines $\bigcup\limits_{L_i\in \mathcal L} L_i$.
We say that $Z$ is \emph{collinearly complete with respect to ${\mathcal L}$} if it satisfies the following property:
whenever a line $T$ meets at least three lines in $\mathcal L$ and $T$ contains at least one point of $Z$, then $Z$ contains all of the intersection points of $T$ with the lines of $\mathcal L$.

Then \cite[Theorem~3.0.3]{politus3} characterizes which finite sets $Z$ have this property in terms of the general projection of $Z$ to a plane. Precisely, let $Z \subset \PP^3$ be a set of points with exactly $a$ points
on each of the $b$ lines of $\mathcal L$. Then $Z$ is collinearly complete
for $\mathcal L$ (equivalently, $Z$ is a union of $C_{\mathcal L}$-orbits)
if and only if, for a general projection $\pi : \PP^3 \to \PP^2$, the image
$\pi(Z)$ is a complete intersection of plane curves of degrees $a$ and $b$
(these sets are in general called $(a,b)$-geproci sets), and the curve of
degree $b$ can be chosen as the union of the images of the lines in $\mathcal L$.

The characterization above naturally leads to the following problem: given a configuration $\mathcal L$ of pairwise skew lines in $\PP^3$, determine when the associated group $G_{\mathcal L}$ is finite, and more generally which finite subgroups of $\PGL_2(K)$ can occur in this way.

This is the main question addressed in the present paper. Our approach is based on a matrix description of skew lines in $\PP^3$, which allows us to express the generators of $G_{\mathcal L}$ explicitly and to translate the problem into a concrete linear-algebraic one. In this way we obtain both structural results and explicit constructions, and we relate them to the geometry of collinearly complete point sets and geproci sets.

Partial results on finite orbits and special configurations can be found in \cite{politus3, ganger2024spreads, kettinger2025}. Since $G_{\mathcal L}$ acts faithfully on each line $L_i\in\mathcal L$, it is naturally a subgroup of $\PGL_2(K)$, so the classification of finite subgroups of $\PGL_2(K)$ provides the natural framework for the problem.

\subsection{Finite subgroups of \texorpdfstring{$\PGL_2(K)$}{PGL2(K)}}

We recall the finite subgroups of $\PGL_2(K)$.

If $p=0$, or more generally if $p\nmid |G|$, then every finite subgroup $G\subset \PGL_2(K)$ is isomorphic to one of
\[
C_n,\qquad D_n,\qquad A_4,\qquad S_4,\qquad A_5.
\]
If $p>0$ and $p\mid |G|$, then in addition one may have finite affine groups of the form $C_p^m\rtimes C_n$, as well as groups of Lie type such as $\PSL_2(\mathbb F_q)$ and $\PGL_2(\mathbb F_q)$.
We refer to \cite{Beauville2010,DolgachevIskovskikh2009,Faber2023,Klein1884,leuschke2012} for background and explicit realizations.

\subsection{Overview of Results}
The main problem considered in this paper is the following realization problem: which finite subgroups of $\PGL_2(K)$ can occur as $G_{\mathcal L}$ for a configuration $\mathcal L$ of pairwise skew lines in $\PP^3_K$?

Our main tool is a matrix description of skew lines in $\PP^3_K$. Fixing two reference skew lines, every other line in the configuration can be represented by a matrix $M_i\in\GL_2(K)$, and the generators of $G_{\mathcal L}$ can be written explicitly in terms of these matrices and their differences. This turns the problem into a concrete linear-algebraic one and makes it possible to treat the abelian and non-abelian cases in a uniform way.

In Section~\ref{s.lines<->matrices}, we translate the geometric properties of a configuration of skew lines in $\PP^3$ into linear-algebraic conditions on the associated matrices $M_i\in\GL_2(K)$, and we express the maps $f_{ijk}$ and the generators of $G_{\mathcal L}$ in terms of these matrices (see Lemma~\ref{l.describe f_ijk} and Theorem~\ref{t.describe G_L}).

In the abelian case, treated in Section~\ref{s.abelian}, we prove that after a change of basis the matrices $M_i$ are simultaneously upper triangular, and we describe the corresponding configurations explicitly (Lemma~\ref{l.commute} and Corollary~\ref{c.G abelian}). In particular, if all $M_i$ are diagonal then the lines admit two common transversals and $G_{\mathcal L}$ is cyclic (Proposition~\ref{p.4 lines case 1}), while in positive characteristic the non-semisimple case produces the elementary abelian group $C_p\times C_p$ (Proposition~\ref{p.4 lines case 2}). We conclude this section by solving the realization problem in the case $G_{\mathcal L}\cong C_3$, that is, by completely characterizing the configurations of skew lines $\mathcal L$ such that
$G_{\mathcal L}\cong C_3,$ see Proposition~\ref{p.3 elements} and Remark~\ref{r.3 elements}.

In the non-abelian case, considered in Section \ref{s.non-abelian}, we prove that no dihedral group $D_n$ with $n\ge 3$ and
$\operatorname{char}(K)\nmid n$ can occur as $G_{\mathcal L}$ (Theorem \ref{t.no Dn}). On the other hand, we construct explicit configurations whose associated group is isomorphic to each of the polyhedral groups $A_4$, $S_4$, and $A_5$ (Examples \ref{ex.A5}, \ref{ex.S4}, and \ref{ex.A4}), and we describe the corresponding orbit structure on a distinguished line. We also give an explicit positive-characteristic example realizing a non-abelian affine $p$-semi-elementary group of the form $C_p\rtimes C_{p-1}$
(Example \ref{non-abelian affine}).

Finally, via the characterization of collinearly complete sets from
\cite[Theorem 3.0.3]{politus3}, our results produce new families of half-grid
geproci sets. In the abelian case, they recover and extend the standard
constructions with two transversals in
\cite[Section~4]{politus1}. In the non-abelian case, they yield half-grid
examples with no transversals, governed instead by the actions of the
polyhedral groups $A_4$, $S_4$, and $A_5$. Thus the group-theoretic description
of $G_{\mathcal L}$ reflects a genuine geometric distinction among collinearly
complete point sets arising from skew-line configurations. For recent
developments on geproci sets and related questions we refer to
\cite{politus1, chiantini2021, FM2024, ganger2024spreads, kettinger2023, kettinger2024}.
\ \\
\ \\
\noindent{\bf Acknowledgment.} Throughout the paper, the calculations were performed using the software Macaulay2~\cite{macaulay2}, and we provide the code in the appendix.
The author used a large language model (ChatGPT, OpenAI) for expository feedback during the revision of the manuscript. The author takes full responsibility for the final text and all results, proofs, citations, and computational verification.

The author is a member of GNSAGA-INdAM. The work of the author was supported by the funding PREMIO\_SINGOLI\_RIC\_[2025] from the Department of Engineering, University of Palermo.
\section{Identifying sets of skew lines in \texorpdfstring{$\PP^3_K$}{P3(K)} via matrices in \texorpdfstring{$\GL_2(K)$}{GL2(K)}}\label{s.lines<->matrices}
The main goal of this section is to translate the geometry of skew lines in $\PP^3$ into an explicit linear-algebraic framework. Once two reference skew lines are fixed, every other line in the configuration can be represented by a matrix in $\GL_2(K)$, and the geometric condition that two lines be skew becomes the invertibility of the difference of the corresponding matrices. This point of view is the key tool used throughout the paper: it allows us to describe the maps $f_{ijk}$ explicitly, to obtain concrete generators for
$G_{\mathcal L}$, and ultimately to turn the realization problem for finite groups arising from skew-line configurations into a tractable matrix problem.

Let $K$ be a field.  We regard $\mathbb P^3$ as the projective space
of $K^4$, whose points are the $1$-dimensional subspaces of $K^4$ and
whose lines correspond to $2$-dimensional subspaces of $K^4$.
Consider two pairwise skew lines $L_\infty, L_0\subset \mathbb
P^3$.  Up to applying a projective transformation of $\mathbb P^3$, we
may assume that these lines are given by
\[
L_{\infty}:\; x=y=0,
    \ \ 
L_{0}:\; z=w=0.
\]
Let $V_0,V_\infty\subset K^4$ be the $2$--dimensional vector subspaces
whose projectivizations are $L_0$ and $L_\infty$ respectively.  Then
\[
V_0 = K^2 \oplus \{0\},
\qquad 
V_\infty = \{0\}\oplus K^2,
\]
so that $K^4 = V_0 \oplus V_\infty,$ and
$\mathbb P^3\cong \mathbb P(V_0 \oplus V_\infty).$

A line $L_i$ is skew to $L_0$ and $L_\infty$ if and only if the corresponding $2$-dimensional subspace $V_i$ intersects neither $V_0$ nor $V_{\infty}$  nontrivially. 
In other words, $V_i$ is the graph of an isomorphism $\psi_i:V_0\to V_\infty$ 
$$V_i=\{(v, \psi_i(v))\ | \ v\in K^2 \}.$$
Using the identifications $V_0\cong K^2$ and $V_\infty\cong K^2$, the
map $\psi_i$ is uniquely represented by a nonsingular  matrix $M_i\in \mathrm{GL}_2(K)$.  Thus,
\[
L_i = \{(v,M_iv)\ | \ v\in K^2\}
      \subset \mathbb P(K^2\oplus K^2).
\]
The bijection is explicitly given by
\begin{align*}
 V_i= \langle (e_1,M_ie_1), (e_2,M_ie_2) \rangle
&\ \leftrightarrow L_i= \mathbb{P}(V_i) = \langle [1:0:a:c], [0:1:b:d] \rangle \\
   M_i=\left(\begin{array}{cc}
   a  & b \\
   c  & d
\end{array}\right) &\ \leftrightarrow L_i: \begin{cases}
    z=ax+by\\
    w=cx+dy
\end{cases}
\end{align*}

Now fix $L_1$ to be $z-x=w-y=0$; then $\psi_1$ is the identity map $id_{K^{2}}$ and $$M_1= \left(\begin{array}{cc}
   1  & 0 \\
   0  & 1
\end{array}\right).$$
\begin{remark}
    A line $L_i$ is skew to $L_1$ if and only if $M_i$ has eigenvalues different from 1. Indeed, $L_i\cap L_1\neq\emptyset$ if and
only if $(v,M_iv)=(v,v)$ for some $v\neq 0$, i.e., $M_iv=v$.
\end{remark}

\begin{lemma}\label{l.skew condition}
    The lines $L_i$ and $L_j$ are skew if and only if $M_i-M_j$ is nonsingular.
\end{lemma}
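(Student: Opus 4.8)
The plan is to unwind the definitions: two lines $L_i$ and $L_j$ meet if and only if the corresponding $2$-dimensional subspaces $V_i, V_j \subset K^4$ intersect nontrivially, and then to translate this intersection condition into a statement about the matrices $M_i, M_j$. Recall that $V_i = \{(v, M_i v) \mid v \in K^2\}$ and $V_j = \{(w, M_j w) \mid w \in K^2\}$. A nonzero vector $(v, M_i v) \in V_i$ lies in $V_j$ precisely when there exists $w \in K^2$ with $v = w$ and $M_i v = M_j w$, i.e. with $M_i v = M_j v$, i.e. with $(M_i - M_j)v = 0$. Since $(v, M_i v) = 0$ forces $v = 0$, the two subspaces share a nonzero vector if and only if $(M_i - M_j)v = 0$ has a nonzero solution $v$, which happens exactly when $M_i - M_j$ is singular.

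First I would note that, by general position, $\dim(V_i \cap V_j) \le 1$ in the skew-versus-meeting dichotomy: two distinct lines in $\PP^3$ either are skew (so $V_i \cap V_j = 0$) or meet in exactly one point (so $\dim(V_i \cap V_j) = 1$); this is just the statement that $L_i \ne L_j$ are already given as distinct. Then the computation above shows $L_i \cap L_j = \emptyset$ (equivalently $V_i \cap V_j = 0$) if and only if the only solution of $(M_i - M_j)v = 0$ is $v = 0$, which is equivalent to $\det(M_i - M_j) \ne 0$, i.e. $M_i - M_j \in \GL_2(K)$. Conversely, if $M_i - M_j$ is singular, pick $v \ne 0$ in its kernel; then $(v, M_i v) = (v, M_j v)$ is a nonzero vector in $V_i \cap V_j$, so the lines meet.

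This is essentially a one-line verification, so there is no real obstacle; the only point requiring a moment's care is the observation that a vector of the form $(v, M_i v)$ is nonzero as soon as its first coordinate $v$ is nonzero, which is what lets us pass between ``$V_i \cap V_j \ne 0$'' and ``$\ker(M_i - M_j) \ne 0$'' without worrying about the second coordinate. Note also that this lemma specializes correctly: taking $M_j = M_1 = I$ recovers the preceding remark that $L_i$ is skew to $L_1$ if and only if $M_i$ has no eigenvalue equal to $1$, i.e. $M_i - I$ is nonsingular.
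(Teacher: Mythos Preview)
Your proof is correct and follows essentially the same approach as the paper: both arguments observe that a common point of $V_i$ and $V_j$ forces $v=w$ and $M_i v = M_j v$, hence $(M_i-M_j)v=0$ for some nonzero $v$, which is precisely the singularity of $M_i-M_j$. Your write-up is slightly more expansive (the remarks on $\dim(V_i\cap V_j)$ and the specialization to $M_j=I$ are not needed for the argument), but the logical content is identical.
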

\begin{proof}
The intersection $L_i\cap L_j$ is nonempty if and only if  
$$L_i\cap L_j=\{(v,M_iv)\ |\ v \in K^2\}\cap \{(w,M_jw)\ |\ w \in K^2\}$$ contains a nonzero vector. That is, $(v,M_iv)=(w,M_jw)$ for some $v,w\in K^2\setminus\{0\}$. This is equivalent to  $v=w$ and  $M_iv=M_jv$, i.e.,  $(M_i-M_j)(v)=0$ that is $M_i-M_j$ is singular.     
\end{proof}
By construction, the matrices $M_i$ are well defined only for lines $L_i$ skew to both $L_0$ and $L_\infty$.
For notational convenience we also write  $$M_0= \begin{pmatrix}
  0  & 0 \\
   0  & 0
\end{pmatrix}\ \text{and}\ M_{\infty}= \begin{pmatrix}
   \infty  & 0 \\
   0  & \infty
\end{pmatrix}$$
to indicate that $L_0$ and $L_\infty$ play the role of ``lines at $0$ and $\infty$''; these symbols are not elements of $\GL_2(K)$, but they fit our notation for the formulas below.

\begin{lemma}\label{l.transv 5 lines} 
    Let $\mathcal L=\{L_0,  L_{\infty}, L_1, L_2, L_3\}$ be a set of five skew lines in $\mathbb P^3$ and let  $M_0$,  $M_{\infty}$, $M_1$, $M_2$, $M_3$ the corresponding matrices. Then  the following are equivalent:
    \begin{enumerate}
        \item there exists a line transversal  to $\mathcal L$;
        \item $M_2$ and $M_3$ have a common eigenvector;
        \item the matrices $M_2$ and $M_3$ are simultaneously triangularizable;
        \item $\det([M_2, M_3])=\det(M_2M_3-M_3M_2)=0.$
    \end{enumerate} 
\end{lemma}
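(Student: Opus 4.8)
The plan is to prove the four conditions equivalent by running everything through $(2)$ --- the existence of a common eigenvector of $M_2$ and $M_3$ --- establishing $(1)\Leftrightarrow(2)$, $(2)\Leftrightarrow(3)$ and $(2)\Leftrightarrow(4)$. Only $(1)\Leftrightarrow(2)$ is genuinely geometric; the rest is linear algebra over the algebraically closed field $K$, where I may conjugate $M_2$, $M_3$ and $[M_2,M_3]$ simultaneously, since each of $(2)$, $(3)$, $(4)$ is invariant under a common change of basis. For $(1)\Leftrightarrow(2)$: a transversal $T$ to $\mathcal L$ meets $L_0$ and $L_\infty$ but coincides with neither (it also meets the remaining one, which is skew to it), hence meets each in a single point, and these two points are distinct since $L_0\cap L_\infty=\varnothing$; thus $T$ is their join, $T=\PP\bigl(\langle(u,0),(0,u')\rangle\bigr)$ with $u,u'\in K^2\setminus\{0\}$, and conversely every such line meets $L_0$ and $L_\infty$. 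A short computation with $L_i=\{(v,M_iv):v\in K^2\}$ shows that $T$ meets $L_i$ if and only if $M_iu$ is proportional to $u'$. Applying this to $L_1$ (with $M_1=I$) forces $u'$ proportional to $u$, so we may take $u'=u$, and then the conditions for $L_2$ and $L_3$ become precisely ``$u$ is an eigenvector of $M_2$'' and ``$u$ is an eigenvector of $M_3$'' --- so a transversal exists iff $M_2$ and $M_3$ share an eigenvector.

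The implication $(2)\Leftrightarrow(3)$ is routine: in a basis in which $M_2$ and $M_3$ are simultaneously upper triangular the first basis vector is a common eigenvector, and conversely completing a common eigenvector to a basis triangularizes both. This also yields $(2)\Rightarrow(4)$, because the commutator of two upper-triangular $2\times2$ matrices is strictly upper triangular, hence nilpotent, hence has determinant zero; note moreover that $[M_2,M_3]$ always has trace $0$, so by Cayley--Hamilton $\det[M_2,M_3]=0$ is equivalent to $[M_2,M_3]^2=0$, i.e.\ to $[M_2,M_3]$ being nilpotent.

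The real content is $(4)\Rightarrow(2)$. Here I would put $M_2$ in Jordan form over $K$ and distinguish three cases. If $M_2=\lambda I$ is scalar, every vector is an $M_2$-eigenvector and any eigenvector of $M_3$ (one exists, $K$ being algebraically closed) is common. If $M_2=\mathrm{diag}(\lambda_1,\lambda_2)$ with $\lambda_1\ne\lambda_2$, its only eigenlines are $\langle e_1\rangle$ and $\langle e_2\rangle$, and a direct computation gives $\det[M_2,M_3]=(\lambda_1-\lambda_2)^2bc$, where $b,c$ are the $(1,2)$- and $(2,1)$-entries of $M_3$; so $\det[M_2,M_3]=0$ forces $b=0$ or $c=0$, making $e_2$ or $e_1$ a common eigenvector. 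If $M_2=\lambda I+N$ is a single Jordan block, then $[M_2,M_3]=[N,M_3]$, a computation gives $\det[M_2,M_3]=-c^2$ with $c$ the $(2,1)$-entry of $M_3$, so $c=0$ and $e_1$ --- the unique eigenvector of $M_2$ --- is common. I expect this case analysis to be the main obstacle: it is the one step needing a real argument rather than a formal manipulation, and the point most prone to error is, in the last two cases, keeping careful track of \emph{all} eigenvectors of $M_2$ (e.g.\ using $\lambda_1\ne\lambda_2$ to know that $\langle e_1\rangle,\langle e_2\rangle$ exhaust them) so that the relation forced on $M_3$ really produces a \emph{common} eigenvector; this is also why the case split seems cleaner than searching for one slick identity.
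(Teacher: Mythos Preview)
Your proposal is correct and follows essentially the same route as the paper: the equivalence $(1)\Leftrightarrow(2)$ via the description $T=\langle(u,0),(0,u)\rangle$, the routine $(2)\Leftrightarrow(3)$, the trace-zero observation reducing $(4)$ to nilpotency of $[M_2,M_3]$, and then $(4)\Rightarrow(2)$ by putting $M_2$ in Jordan form and computing $\det[M_2,M_3]$ in the diagonalizable and Jordan-block cases. The only cosmetic difference is that you split off the scalar case up front, whereas the paper folds it into the diagonalizable case; your organization is arguably a bit cleaner.
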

\begin{proof}
We first prove the equivalence of $(1)$ and $(2)$.

\smallskip
\noindent
$(1)\iff (2)$.
    A line $T$ meets $L_0$ and $L_{\infty}$ if and only if $T$ is the span of $(v,0)$ and $(0,w)$ for some $v,w\in K^2$. 
The additional condition that $T$ meets $L_1=\{(u,u)\ |\ u\in K^2\}$ means
that there exists $u\neq 0$ with $(u,u)\in T$, i.e.,
\[
(u,u)=\alpha (v,0)+\beta(0,w)=(\alpha v,\beta w)
\]
for some $\alpha,\beta\in K$. Hence $u=\alpha v=\beta w$, so $w$ is proportional
to $v$. After rescaling we may assume $w=v$, and therefore $T$  is the span of  $(v,0)$ and $(0,v).$

    Moreover, $T$ meets $L_2$ if and only if $(u,M_2u)\in T$ for some $u\in K^2,$ i.e., $$(u,M_2u)=\alpha(v,0)+\beta (0,v)=(\alpha v, \beta v), \ \ \alpha,\beta\in K.$$
That is, $u=\alpha v$ and $M_2\alpha v=\beta v$, which means that $v$ is an eigenvector for $M_2.$
The same argument shows that $v$ is an eigenvector for $M_3.$

\smallskip
\noindent
$(2)\iff (3)$.A vector $v$ is a common eigenvector for $M_2$ and $M_3$ if and only if in a basis $[v,w]$ of $K^2$, the matrices  $M_2$ and $M_3$ are both upper triangular (and thus simultaneous triangularizable).

\smallskip
\noindent
$(2)\iff (4)$.   Let $C=[M_2,M_3]=M_2M_3-M_3M_2$.

If $v$ is a common eigenvector for $M_2$ and $M_3$ with eigenvalues $\lambda_2$ and $\lambda_3$ respectively, then
\[
(M_2M_3-M_3M_2)(v) = \lambda_3 M_2v-\lambda_2 M_3v = \lambda_3\lambda_2v-\lambda_2\lambda_3v = 0,
\]
so $C$ is singular and hence $\det(C)=0$.

Conversely, it is a general fact that $\mathrm{tr}([A,B])=0$ for any matrices $A,B$. 
Hence, if $\det(C)=0$ then the characteristic polynomial of $C$ is $$\chi_C(\lambda)= \lambda^2 - \mathrm{tr}(C)\lambda + \det(C)=\lambda^2.$$ 
Thus $C$ is nilpotent, in particular $C^2=0$.

If $C=0$ then $M_2$ and $M_3$ commute and, since $K$ is algebraically closed and $M_2,M_3\in \GL_2(K)$, they are simultaneously triangularizable; in particular they have a common eigenvector.

Assume now $C\neq 0.$  Since $\det(P^{-1}CP)=\det(C)$ for every invertible
$P$, we are free to choose a convenient basis of $K^2$ in which to work.

\smallskip

\emph{Case (a): $M_2$ is diagonalizable.}
Then, up to a change of basis, we may write
\[
M_2 = \begin{pmatrix} a_{11} & 0 \\[2pt] 0 & a_{22} \end{pmatrix},\qquad
M_3 = \begin{pmatrix} b_{11} & b_{12} \\[2pt] b_{21} & b_{22} \end{pmatrix}.
\]
A direct computation gives
\[
C
=
\begin{pmatrix}
 0 & (a_{11}-a_{22}) b_{12} \\
 (a_{22}-a_{11}) b_{21} & 0
\end{pmatrix}\implies 
\det(C)
= b_{12} b_{21} (a_{11}-a_{22})^2.
\]
If either $b_{21}=0$ or $b_{12}=0$ then $M_2$ and $M_3$ share a common eigenvector. If $a_{11}=a_{22}$, then $M_2=a_{11} I$ is a scalar matrix.
Since $K$ is algebraically closed, $M_3$ has at least one eigenvector $v$,
and then $M_2 v = a_{11} v$ as well, so $v$ is a common eigenvector.
  
  \smallskip
\emph{Case (b): $M_2$ is not diagonalizable.}
  Since $K$ is algebraically closed, $M_2$ has an eigenvalue $a_{11}$ and a Jordan form.
Up to a change of basis, we may assume
\[
M_2 = \begin{pmatrix} a_{11} & a_{12} \\[2pt] 0 & a_{11} \end{pmatrix},
\ a_{12}\neq 0,
\ \ 
\text{and set}\ \ 
M_3 = \begin{pmatrix} b_{11} & b_{12} \\[2pt] b_{21} & b_{22} \end{pmatrix}.
\]
A direct computation yields
\[
C
=
\begin{pmatrix}
 a_{12} b_{21} & -a_{12} b_{11} + a_{12} b_{22} \\
 0             & -a_{12} b_{21}
\end{pmatrix}
\implies 
\det(C) = a_{12}^2 b_{21}^2.
\]
Since $a_{12}\neq 0$ and $\det(C)=0$, we must have $b_{21}=0$, so $M_3$
is upper triangular in the same basis as $M_2$. This implies that $M_2$ and $M_3$ share a common eigenvector.
\end{proof}

From the proof of Lemma \ref{l.transv 5 lines} we obtain an immediate description of transversals. 
\begin{corollary}
    Let $\mathcal L=\{L_0,  L_{\infty}, L_1, L_2, L_3\}$ be a set of five skew lines in $\mathbb P^3$ admitting a  transversal,  and let  $M_0$,  $M_{\infty}$, $M_1$, $M_2$, $M_3$ be the corresponding matrices. Then, a line $T$ is transversal  to $\mathcal L$ if and only if $$T=\langle(v,0),(0,v)\rangle$$ for some $v\in K^2$ that is an eigenvector for both $M_2$ and $M_3$.  
\end{corollary}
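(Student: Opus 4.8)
This corollary is a direct reading of the proof of Lemma~\ref{l.transv 5 lines}: the equivalence $(1)\iff(2)$ was obtained there by tracking exactly which lines can be transversal to $\mathcal L$, and the plan is simply to record that bookkeeping and run the argument in both directions.

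For the forward implication I would argue as in the lemma. If $T$ is transversal to $\mathcal L$, then since it meets $L_0$ and $L_\infty$ it is spanned by vectors $(v,0)$ and $(0,w)$ with $v,w\in K^2\setminus\{0\}$. Meeting $L_1=\{(u,u):u\in K^2\}$ forces $\alpha v=\beta w$ for some scalars $\alpha,\beta$ (both nonzero, since the point is nonzero), hence $w$ is proportional to $v$; rescaling the generator $(0,w)$ we may take $w=v$, so $T=\langle(v,0),(0,v)\rangle$. Meeting $L_2$ then requires $(u,M_2u)=(\alpha v,\beta v)$ for some $u\ne 0$, i.e. $M_2v=(\beta/\alpha)v$, so $v$ is an eigenvector of $M_2$; the identical argument applied to $L_3$ shows $v$ is an eigenvector of $M_3$ as well.

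For the converse, given $v\in K^2\setminus\{0\}$ with $M_2v=\lambda_2v$ and $M_3v=\lambda_3v$, I would set $T=\langle(v,0),(0,v)\rangle$ — a genuine line, since $(v,0)$ and $(0,v)$ are linearly independent whenever $v\ne 0$ — and check the five incidences directly: $(v,0)\in V_0$, $(0,v)\in V_\infty$, $(v,v)\in V_1$, and $(v,M_iv)=(v,\lambda_i v)=(v,0)+\lambda_i(0,v)\in T$ for $i=2,3$. Hence $T$ meets each of $L_0,L_\infty,L_1,L_2,L_3$, i.e. it is transversal to $\mathcal L$.

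I do not anticipate any real obstacle here: everything needed is already contained in the proof of Lemma~\ref{l.transv 5 lines}. The only points worth a sentence are that $T=\langle(v,0),(0,v)\rangle$ is a well-defined $2$-dimensional subspace whenever $v\ne 0$, and that the standing hypothesis that $\mathcal L$ admits a transversal is exactly what guarantees (via that lemma) the existence of a common eigenvector of $M_2$ and $M_3$ in the first place, so the two sides of the stated equivalence are genuinely nonempty.
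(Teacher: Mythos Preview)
Your proposal is correct and follows exactly the paper's approach: the paper simply states that the corollary is an immediate consequence of the proof of Lemma~\ref{l.transv 5 lines}, and you have faithfully extracted and recorded both directions of that argument.
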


In the next lemma we write the maps $f_{ijk}$ in terms of the matrices associated to the lines.

\begin{lemma}\label{l.describe f_ijk}
Let $\mathcal L=\{L_0,L_{\infty},L_1,\ldots,L_r\}$ be a set of skew lines in
$\mathbb P^3$, and let $M_0=0$, $M_1=I$, $M_2,\ldots,M_r$ be the
corresponding matrices. For any triple of distinct indices, the map
$f_{ijk}:L_i\to L_j$ is represented, under the parametrizations
$L_i\simeq \mathbb P(K^2)$, by $F_{ijk}\in\PGL_2(K)$ as follows.
\begin{enumerate}
\item If $i,j,k\in\{0,1,\ldots,r\}$, then
\[
F_{ijk}=[(M_j-M_k)^{-1}(M_i-M_k)].
\]
\item If one of the three indices is $\infty$, then
\[
F_{ij\infty}=[I],\qquad
F_{i\infty k}=[M_i-M_k],\qquad
F_{\infty jk}=[(M_j-M_k)^{-1}].
\]
\end{enumerate}
\end{lemma}
\begin{proof}
\noindent (1) Fix a point $p_i(v)=(v,M_iv)\in L_i$ for some nonzero $v\in K^2$. The plane
spanned by $p_i(v)$ and $L_k$ is
\[
\Pi_k(v)=\{\lambda(v,M_iv)+(w,M_kw)\mid \lambda\in K,\ w\in K^2\}.
\]
Since $L_j$ is disjoint from $L_k$, the point
$L_j\cap \Pi_k(v)$ does not lie on $L_k$. Hence $\lambda\neq 0$, and
after rescaling we may take $\lambda=1$.
A point $p_j(u)=(u,M_ju)\in L_j\cap\Pi_k(v)$ satisfies
\[
(u,M_ju)=(v,M_iv)+(w,M_kw)
\]
for some $w\in K^2$, which gives
\[
\begin{cases}
u=v+w,\\
M_ju=M_iv+M_kw.
\end{cases}
\]
Substituting $w=u-v$ into the second equation:
\[
M_ju=M_iv+M_k(u-v)
\implies
(M_j-M_k)u=(M_i-M_k)v.
\]
Since $L_j$ and $L_k$ are skew, Lemma~\ref{l.skew condition} implies that 
$M_j-M_k$ is nonsingular, so
\[
u=(M_j-M_k)^{-1}(M_i-M_k)v,
\]
and therefore $F_{ijk}=[(M_j-M_k)^{-1}(M_i-M_k)]$.
\medskip

\noindent(2) Recall that $L_0=\{(v,0)\mid v\in K^2\}$ and 
$L_\infty=\{(0,u)\mid u\in K^2\}$.

\begin{itemize}
    \item[$F_{ij\infty}$:] The plane spanned by $p_i(v)=(v,M_iv)$ 
and $L_\infty=\{(0,u)\mid u\in K^2\}$ is
\[
\Pi_\infty(v)=\{(v,M_iv)+(0,u)\mid u\in K^2\}=\{(v,M_iv+u)\mid u\in K^2\}.
\]
This plane meets $L_j=\{(w,M_jw)\mid w\in K^2\}$ when $(w,M_jw)=(v,M_iv+u)$, 
giving $w=v$ and $u=M_jv-M_iv$. Thus the image of $p_i(v)$ in $L_j$ is 
$p_j(v)=(v,M_jv)$, which shows that $F_{ij\infty}=[I]$.\medskip
\item[$F_{i\infty k}$:] The plane spanned by $p_i(v)=(v,M_iv)$ 
and $L_k=\{(w,M_kw)\mid w\in K^2\}$ meets $L_\infty=\{(0,u)\mid u\in K^2\}$ 
when
\[
(0,u)=(v,M_iv)+(w,M_kw)
\]
for some $w\in K^2$, giving $w=-v$ and $u=M_iv-M_kv=(M_i-M_k)v$. Thus the 
image of $p_i(v)$ in $L_\infty$ is $(0,(M_i-M_k)v)$, which shows that 
$F_{i\infty k}=[M_i-M_k]$.\medskip
\item[$F_{\infty jk}$:] A point of $L_\infty$ has the form 
$(0,u)$. The plane spanned by $(0,u)$ and $L_k=\{(w,M_kw)\mid w\in K^2\}$ 
meets $L_j=\{(v,M_jv)\mid v\in K^2\}$ when
\[
(v,M_jv)=(0,u)+(w,M_kw)
\]
for some $w\in K^2$, giving $v=w$ and $M_jv=u+M_kv$, so 
$u=(M_j-M_k)v$. Since $M_j-M_k$ is nonsingular, we get 
$v=(M_j-M_k)^{-1}u$, which shows that $F_{\infty jk}=[(M_j-M_k)^{-1}]$. \qedhere
\end{itemize} 
\end{proof}

The preceding lemma describes the individual arrows of the groupoid. To obtain
the group $G_{\mathcal L}$, one has to consider closed compositions, i.e.,
compositions which start and end on the same line. The following theorem is the
matrix version of \cite[Proposition~2.1.3]{politus3}.
\begin{theorem}\label{t.describe G_L}
Let
\[
\mathcal L=\{L_0,L_{\infty},L_1,\ldots,L_r\}
\]
be a set of skew lines in $\mathbb P^3$, and let
$M_0=0$, $M_1=I$, $M_2,\ldots,M_r$ be the corresponding matrices.
Let $\mathcal L'\subseteq \mathcal L$ be any subconfiguration with at least
three lines. Then the group $G_{\mathcal L'}$ is generated by the closed
compositions represented by
\[
F_{jih}F_{ijk}
\qquad\text{and}\qquad
F_{kic}F_{jkb}F_{ija},
\]
where all indices are chosen among the lines of $\mathcal L'$, and each
triple appearing in an $F_{\alpha\beta\gamma}$ consists of distinct indices.
\end{theorem}
\begin{proof}
By \cite[Proposition~2.1.3]{politus3}, the group $G_{\mathcal L'}$ is generated
by closed compositions of the maps $f_{ijk}$ of the two forms above. By
Lemma~\ref{l.describe f_ijk}, each map $f_{ijk}$ is represented, under the
parametrizations $L_i\simeq \mathbb P(K^2)$, by the corresponding element
$F_{ijk}\in\PGL_2(K)$. Hence the closed compositions are represented by the
products displayed above.
\end{proof}
When the configuration contains the three distinguished lines
$L_0,L_\infty,L_1$, the general closed-composition generators simplify
substantially. In this normalization they are generated by the difference
classes $[M_i-M_j]$.
\begin{corollary}\label{c.describe G_L}
With notation as above, assume that
$
\mathcal L'=\mathcal L=\{L_0,L_{\infty},L_1,\ldots,L_r\}.
$ 
Then
\[
G_{\mathcal L}
=
\left\langle [M_i-M_j]\ \middle|\ 0\le i\neq j\le r \right\rangle
\subseteq \PGL_2(K).
\]
\end{corollary}
\begin{proof}
Let
\[
H=\left\langle [M_i-M_j]\mid 0\le i\neq j\le r\right\rangle
\subseteq \PGL_2(K).
\]

We first show that $G_{\mathcal L}\subseteq H$. By
Theorem~\ref{t.describe G_L}, the group $G_{\mathcal L}$ is generated by
closed compositions represented by
$
F_{jih}F_{ijk}
\ \text{and}\
F_{kic}F_{jkb}F_{ija}.$

By Lemma~\ref{l.describe f_ijk}, each factor $F_{\alpha\beta\gamma}$ is
represented either by the identity, or by a class of the form
$[M_a-M_b]$, or by the inverse of such a class. Hence every such closed
composition belongs to $H$. Therefore $G_{\mathcal L}\subseteq H$.

Conversely, we show that $H\subseteq G_{\mathcal L}$. Since
$L_{\infty},L_0,L_1$ belong to the configuration, Lemma~\ref{l.describe f_ijk}
gives for $i>1$
\[
F_{1\infty i}F_{\infty 1 0}
=
[I-M_i]\cdot[I]
=
[I-M_i]\in G_{\mathcal L}.
\]
Moreover,
\[
F_{i\infty 0}F_{\infty i 1}
=
[M_i]\,[M_i-I]^{-1}
\in G_{\mathcal L}.
\]
Since $[I-M_i]\in G_{\mathcal L}$, it follows that $[M_i]\in G_{\mathcal L}$.
Finally,
\[
F_{i\infty j}F_{\infty i0}
=
[M_i-M_j]\,[M_i]^{-1}
\in G_{\mathcal L}.
\]
Thus $[M_i-M_j]\in G_{\mathcal L}$ for all $0\le i\neq j\le r$, and hence
$H\subseteq G_{\mathcal L}$. 
Therefore $G_{\mathcal L}=H$.
\end{proof}

\begin{corollary}    Let $\mathcal L=\{L_0,  L_{\infty}, L_1, \ldots, L_r\}$ be a set of skew lines in $\mathbb P^3$ and let  $M_0$,  $M_1$, $\ldots$, $M_r$ the corresponding matrices. 
If $G_{\mathcal L}$ is finite then  for every choice of the distinct indices $i,j\in\{0,\ldots, r\}$  the matrix $M_i-M_j$ has the property that the ratio of its two eigenvalues is a root of unity. 
\end{corollary}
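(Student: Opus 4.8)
The plan is to deduce this corollary directly from Lemma~\ref{l.describe f_ijk} together with the hypothesis that $G_{\mathcal L}$ is finite. By Lemma~\ref{l.describe f_ijk}, for any three distinct indices $i,j,k\in\{0,\ldots,r\}$ the matrix $A:=(M_j-M_k)^{-1}(M_i-M_k)$ represents the element $F_{ijk}\in G_{\mathcal L}$, and hence $[A]$ is an element of the finite group $G_{\mathcal L}\subseteq\PGL_2(K)$. Therefore $[A]$ has finite order, say $[A]^n=[I]$ in $\PGL_2(K)$, which means $A^n=cI$ for some scalar $c\in K^\ast$.

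The core step is then the elementary linear algebra observation: if $A\in\GL_2(K)$ satisfies $A^n=cI$, then writing the two eigenvalues of $A$ as $\mu_1,\mu_2$ (which exist in $K$ since $K$ is algebraically closed), we have $\mu_1^n=\mu_2^n=c$, so $(\mu_1/\mu_2)^n=1$; that is, the ratio $\mu_1/\mu_2$ is an $n$-th root of unity. One small point to address: the eigenvalues of $A$ are nonzero because $A$ is invertible — indeed $A=(M_j-M_k)^{-1}(M_i-M_k)$ is a product of invertible matrices by Lemma~\ref{l.skew condition} — so the ratio $\mu_1/\mu_2$ is well defined.

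I would write this as follows.

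\begin{proof}
Fix distinct indices $i,j,k\in\{0,\ldots,r\}$ and set $A=(M_j-M_k)^{-1}(M_i-M_k)$.
By Lemma~\ref{l.skew condition} the lines in $\mathcal L$ are pairwise skew, so the matrices $M_i-M_k$ and $M_j-M_k$ are nonsingular and hence $A\in\GL_2(K)$; in particular its two eigenvalues $\mu_1,\mu_2\in K$ (which exist since $K$ is algebraically closed) are nonzero, and the ratio $\mu_1/\mu_2$ is well defined.
By Lemma~\ref{l.describe f_ijk}, the projective class $[A]=F_{ijk}$ is an element of $G_{\mathcal L}$.
Since $G_{\mathcal L}$ is finite, $[A]$ has finite order in $\PGL_2(K)$, say $[A]^n=[I]$ for some positive integer $n$.
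Lifting to $\GL_2(K)$, this means $A^n=cI$ for some $c\in K^\ast$.
Passing to eigenvalues, $\mu_1^n=\mu_2^n=c$, so $(\mu_1/\mu_2)^n=1$.
Hence the ratio of the two eigenvalues of $A$ is an $n$-th root of unity, which proves the claim.
\end{proof}

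This is a short corollary and I do not expect any genuine obstacle; the only things to be careful about are (a) invoking the skew hypothesis to guarantee $A$ is invertible so that the eigenvalue ratio makes sense, and (b) the passage from $[A]^n=[I]$ in $\PGL_2(K)$ to $A^n=cI$ in $\GL_2(K)$, which is immediate from the definition $\PGL_2(K)=\GL_2(K)/K^\ast I$. If one wanted a self-contained argument avoiding the word "eigenvalue ratio", one could instead note that $A^n=cI$ forces the characteristic polynomial of $A$ to divide $t^n-c$ up to scalar, but the eigenvalue computation above is the cleanest route.
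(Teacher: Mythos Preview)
Your proof is correct and follows essentially the same approach as the paper's: identify $[A]=F_{ijk}$ as a finite-order element of $\PGL_2(K)$ and deduce that the eigenvalue ratio is a root of unity. The paper simply invokes this last step as ``a standard fact about $\PGL_2(K)$'', whereas you spell it out via $A^n=cI\Rightarrow\mu_1^n=\mu_2^n=c$; your version is more self-contained. One minor point: to conclude $F_{ijk}\in G_{\mathcal L}$ you really want Theorem~\ref{t.describe G_L} (which says the $F_{ijk}$ generate, hence lie in, $G_{\mathcal L}$ as a subgroup of $\PGL_2(K)$) rather than Lemma~\ref{l.describe f_ijk} alone, since $f_{ijk}$ is a morphism $L_i\to L_j$ in the groupoid, not an endomorphism.
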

\begin{proof}
By Corollary~\ref{c.describe G_L}, the class $[M_i-M_j]$ belongs to
$G_{\mathcal L}$ for every $0\le i\neq j\le r$. If $G_{\mathcal L}$ is
finite, then each such class has finite order in $\PGL_2(K)$. This is
equivalent to the ratio of the two eigenvalues of $M_i-M_j$ being a root of
unity.
\end{proof}

Now fix a line $L\in \mathcal L$.  The group $G_{\mathcal L}$ acts on $L$ by automorphisms via its identification with a subgroup of $\Aut(L)\cong \PGL_2(K)$; this action is faithful.

\begin{corollary}  Let $\mathcal L=\{L_0,  L_{\infty}, L_1, \ldots, L_r\}$ be a set of skew lines in $\mathbb P^3$. If $G_{\mathcal L}$ is finite and $L\in\mathcal L$ then 

\begin{enumerate}
    \item for any point $p=(v,Mv) \in L$,
$$|G_{\mathcal L}|=|[p]_{\mathcal L}\cap L|\cdot |(G_{\mathcal L})_p|$$
where $(G_{\mathcal L})_p=\{g\in G_{\mathcal L}\ |\ g(p)=p \}$ is the stabilizer of $p$ in $G_{\mathcal L}$. 
\item For a general point $p\in L$, we have $(G_{\mathcal L})_p = \{1\}$, hence
$|[p]_{\mathcal L}\cap L| = |G|$.

\item For a point $p=(v,Mv) \in L$, we have $|[p]_{\mathcal L}\cap L| < |G_{\mathcal L}|$
if and only if $p$ is fixed by some nontrivial element of $G_{\mathcal L}$.
Equivalently, after choosing lifts of $G_{\mathcal L}$ to $\GL_2(K)$, the vector $v$ is an eigenvector of a nontrivial matrix representing an element
of $G_{\mathcal L}$.
\end{enumerate}
\end{corollary}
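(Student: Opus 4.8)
The plan is to derive all three parts from the orbit--stabilizer theorem applied to the action of $G_{\mathcal L}$ on the line $L$, so the first task is to identify $[p]_{\mathcal L}\cap L$ with the $G_{\mathcal L}$--orbit of $p$. Fix $L=L_i$. By definition $[p]_{\mathcal L}$ consists of the images $\phi(p)$ for all arrows $\phi$ of $C_{\mathcal L}$ with source $L_i$, and each such $\phi$ is an arrow $L_i\to L_j$, so $\phi(p)\in L_j$. If moreover $\phi(p)\in L$, then $\phi(p)\in L\cap L_j$; since the lines of $\mathcal L$ are pairwise skew (in particular disjoint) this forces $L_j=L$, i.e.\ $\phi\in\operatorname{Hom}_{C_{\mathcal L}}(L_i,L_i)=G_{\mathcal L}$. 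Conversely every $g\in G_{\mathcal L}$ is such an arrow with target $L$, so $[p]_{\mathcal L}\cap L=\{\,g(p)\mid g\in G_{\mathcal L}\,\}$ is exactly the orbit of $p$ under the (faithful) action of $G_{\mathcal L}$ on $L\cong\PP^1_K$. Since $G_{\mathcal L}$ is finite, orbit--stabilizer gives $|G_{\mathcal L}|=|[p]_{\mathcal L}\cap L|\cdot|(G_{\mathcal L})_p|$, which is $(1)$.

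Next I would prove $(2)$. The input here is the elementary fact that a non-identity element of $\Aut(\PP^1_K)\cong\PGL_2(K)$ has at most two fixed points: writing it as $t\mapsto(at+b)/(ct+d)$, the finite fixed points solve $ct^2+(d-a)t-b=0$, and together with the possible fixed point $\infty$ this can only be all of $\PP^1_K$ for the identity. Hence $S:=\bigcup_{1\neq g\in G_{\mathcal L}}\operatorname{Fix}(g)$ is a finite subset of $L$ (a union of at most $|G_{\mathcal L}|-1$ sets of size $\le 2$), so, $K$ being algebraically closed and hence infinite, $L\setminus S$ is cofinite and nonempty; for $p\in L\setminus S$ we have $(G_{\mathcal L})_p=\{1\}$ and $(1)$ gives $|[p]_{\mathcal L}\cap L|=|G_{\mathcal L}|=|G|$.

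For $(3)$, the first equivalence is immediate from $(1)$: since $|(G_{\mathcal L})_p|\ge 1$, the product formula shows $|[p]_{\mathcal L}\cap L|<|G_{\mathcal L}|$ iff $|(G_{\mathcal L})_p|>1$, i.e.\ iff $p$ is fixed by some nontrivial $g\in G_{\mathcal L}$. For the matrix reformulation I would run the computation of Lemma \ref{l.describe f_ijk} with $j=i$: an element $g\in G_{\mathcal L}$ acts on $L=L_i$ by $(v,M_iv)\mapsto(Av,M_iAv)$ for a lift $A\in\GL_2(K)$, and $g\neq 1$ exactly when $A$ is non-scalar; then $g(p)=p$ iff $Av$ is proportional to $v$, i.e.\ iff $v$ is an eigenvector of $A$. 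By Theorem \ref{t.describe G_L} (or Corollary \ref{c.describe G_L}) the matrix $A$ can be taken to be a word in the matrices $(M_j-M_k)^{-1}(M_i-M_k)$ representing the generators $F_{ijk}$, so $(G_{\mathcal L})_p$ is exactly the set of elements of $G_{\mathcal L}$ whose representing matrices have $v$ as an eigenvector, and it is nontrivial precisely when $v$ is an eigenvector of some non-scalar such matrix.

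I expect the arguments to be short, with the only real care needed in two bookkeeping points: first, the identification $[p]_{\mathcal L}\cap L=G_{\mathcal L}\cdot p$, where pairwise skewness of $\mathcal L$ is exactly what rules out a point of $L$ being reached from a different line; and second, the precise reading of the last sentence of $(3)$ --- literally one obtains that $v$ is an eigenvector of a non-scalar matrix representing an element of $G_{\mathcal L}$, and Theorem \ref{t.describe G_L} is then invoked to express that element, hence that matrix, through the $F_{ijk}$. Everything else is the orbit--stabilizer theorem together with the fact that a nontrivial M\"obius transformation fixes at most two points.
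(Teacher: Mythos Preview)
Your proposal is correct and follows the same route as the paper: all three parts are deduced from the orbit--stabilizer theorem for the action of $G_{\mathcal L}$ on $L$, together with the fact that a nontrivial M\"obius transformation has at most two fixed points. Your write-up is in fact more careful than the paper's own proof, which simply asserts that $[p]_{\mathcal L}\cap L$ is the $G_{\mathcal L}$-orbit and that a general point has trivial stabilizer; you supply the skewness argument for the former and the finite-fixed-point argument for the latter, and you also correctly flag that the literal phrase ``some $F_{ijk}$'' in (3) should be read as ``some element of $G_{\mathcal L}$, expressed via the $F_{ijk}$''.
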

\begin{proof}
(1) By definition, $[p]_{\mathcal L}\cap L_i$ is the orbit of $p$ under $G_{\mathcal L}$ acting on $L_i$, hence the equality is the orbit--stabilizer theorem.

(2) For a general point $p\in L_i$, no nontrivial projective linear transformation in $G_{\mathcal L}$ fixes $p$; thus $(G_{\mathcal L})_p=\{1\}$ and the statement follows from (1).

(3) A point $p=(v,M_iv)$ is fixed by a nontrivial element $g\in G_{\mathcal L}$ if and only if the corresponding matrix in $\GL_2(K)$ has $v$ as an eigenvector. Again using (1), we have $|[p]_{\mathcal L}\cap L_i|<|G_{\mathcal L}|$ if and only if the stabilizer $(G_{\mathcal L})_p$ is nontrivial, which is equivalent to the existence of such an element.
\end{proof}

In the next two sections we analyze the possible groups $G_{\mathcal L} \subset \PGL_2(K)$,
taking into account the characteristic of $K$. We split the discussion into two
cases, according to whether $G_{\mathcal L}$ is abelian or non-abelian.

\section{The abelian case}\label{s.abelian}
In this section we study the case where $G_{\mathcal L}$ is abelian. Using the matrix description from Section \ref{s.lines<->matrices}, we show that, after a suitable change of basis, all the matrices $M_i$ are simultaneously upper triangular, and we analyze separately the diagonal and non-diagonal cases. This leads to an explicit description of the corresponding configurations of lines and to families realizing cyclic groups and $p$–semi-elementary groups $(C_p)^m\rtimes C_n$.

\begin{remark}
The lines $\mathcal L=\{L_0, L_{\infty}, L_1,\ldots,L_r\}$ are contained in the same smooth quadric surface 
if and only if all the matrices $M_2,\ldots,M_r$
are scalar matrices, i.e., $M_j = a_j I$ for some $a_j\in K$.  
Equivalently, each $M_j$ has a single eigenvalue whose geometric multiplicity is $2$.
In this situation, all lines $L_j$ belong to the same ruling of the smooth quadric defined by $xw-yz=0$,  and every line in the opposite ruling is a transversal to all of them.  Consequently, the group $G_{\mathcal L}$ is abelian (in fact trivial).
\end{remark}

\begin{lemma}\label{l.commute} Let $A,B\in \GL_2(K)$ and consider their class 
     $[A],[B]\in \PGL_2(K).$ Then $[A][B]=[B][A]$ if and only if either
    \begin{enumerate}
        \item $A,B$ are simultaneously diagonalizable;
        \item $A,B$ are both non-diagonalizable and share the same eigenspace; 
        \item either $A$ or $B$ is a multiple of the identity.
    \end{enumerate}
\end{lemma}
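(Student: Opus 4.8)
Since $\PGL_2(K)=\GL_2(K)/K^\times$, the relation $[A][B]=[B][A]$ is equivalent to $AB=\lambda BA$ for some $\lambda\in K^\times$; taking determinants forces $\det A\det B=\lambda^2\det A\det B$, hence $\lambda^2=1$, so $\lambda=1$ or (only when $\operatorname{char}K\neq2$) $\lambda=-1$. The implication from (1)--(3) to commutativity in $\PGL_2(K)$ is straightforward and I would dispatch it first: in case (1) a common eigenbasis makes $A$ and $B$ diagonal, so $AB=BA$; in case (2), putting the shared $1$-dimensional eigenspace first presents $A$ and $B$ as Jordan blocks $\bigl(\begin{smallmatrix}a&\alpha\\0&a\end{smallmatrix}\bigr)$ and $\bigl(\begin{smallmatrix}b&\beta\\0&b\end{smallmatrix}\bigr)$ with $\alpha,\beta\neq0$, which commute; and in case (3) one of $[A],[B]$ is the identity of $\PGL_2(K)$.

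For the converse the core case is $\lambda=1$, i.e.\ $AB=BA$ as matrices. If $A$ or $B$ is a scalar matrix we are in case (3). Otherwise neither is scalar, so each has minimal polynomial of degree $2$ and hence is non-derogatory; the centralizer of $A$ in $M_2(K)$ is then the algebra $K[A]=\{\alpha I+\beta A\}$, and $B=\alpha I+\beta A$ for some $\alpha,\beta\in K$ with $\beta\neq0$. Now split on the Jordan type of $A$: if $A$ is diagonalizable then $B=\alpha I+\beta A$ is diagonal in the same basis, which is case (1); if $A$ is a single Jordan block (the only alternative over the algebraically closed field $K$), then so is $B$, since $\beta\neq0$, and $A$ and $B$ share the eigenspace of $A$, which is case (2). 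This part is routine linear algebra over an algebraically closed field.

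I expect the real obstacle to be the residual case $\lambda=-1$, where $A$ and $B$ anticommute, so $ABA^{-1}=-B$. Conjugation by $A$ then sends the eigenvalue multiset of $B$ to its negative; since $\operatorname{char}K\neq2$ and $\det B\neq0$ this forces the eigenvalues of $B$ to be $\{\mu,-\mu\}$ with $\mu\neq0$, so $B$ is diagonalizable with distinct eigenvalues, and symmetrically for $A$. Diagonalizing $A$ and imposing $ABA^{-1}=-B$ then makes $B$ antidiagonal, so up to conjugation and rescaling $[A]=\bigl[\bigl(\begin{smallmatrix}1&0\\0&-1\end{smallmatrix}\bigr)\bigr]$ and $[B]=\bigl[\bigl(\begin{smallmatrix}0&1\\1&0\end{smallmatrix}\bigr)\bigr]$; these commute in $\PGL_2(K)$ (together they generate a Klein four-group) yet satisfy none of (1)--(3). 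This is the step I would scrutinize most carefully: either the statement needs this extra possibility added, or it is tacitly excluded in the intended applications --- note that it corresponds to a $D_2$-type configuration, which Theorem~\ref{t.no Dn} does not forbid.
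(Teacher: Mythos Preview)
Your treatment of the case $\lambda=1$ is correct and conceptually cleaner than the paper's: instead of invoking that a non-scalar $2\times 2$ matrix is non-derogatory and hence has centralizer $K[A]$, the paper simply diagonalizes (or triangularizes) $A$ and compares the entries of $AB$ and $BA$ directly. Both routes lead to the same conclusion, but your centralizer argument avoids splitting into subcases on the shape of $B$.

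More importantly, you are right about the residual case $\lambda=-1$. The pair
\[
A=\begin{pmatrix}1&0\\0&-1\end{pmatrix},\qquad
B=\begin{pmatrix}0&1\\1&0\end{pmatrix}
\]
satisfies $AB=-BA$, so $[A]$ and $[B]$ commute in $\PGL_2(K)$, yet neither is scalar, both are diagonalizable, and they are \emph{not} simultaneously diagonalizable. This is a genuine counterexample to the lemma as stated. The paper's proof misses exactly this: after writing $A=\mathrm{diag}(a_0,d_0)$ and a general $B$, it concludes ``$[AB]=[BA]$ if and only if either $a_0=d_0$ or $b_1=c_1=0$'', but this implicitly compares entries as though $\lambda=1$; when $a_1=d_1=0$ (i.e.\ $B$ is anti-diagonal) the diagonal entries of $AB$ and $BA$ both vanish and impose no constraint on $\lambda$, and the off-diagonal entries then allow $\lambda=-1$ with $a_0=-d_0$. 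So the paper's computation has a gap at precisely the point you flagged, and the lemma needs a fourth alternative (or the hypothesis $AB=BA$ in $\GL_2(K)$ rather than $\PGL_2(K)$). Your instinct to connect this to the Klein four-group inside $\PGL_2(K)$ is the right diagnosis.
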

\begin{proof}
    If $[A]$ and $[B]$ commute then $AB=\lambda BA$ for some $\lambda\in K^*.$
    
    If $A$ is diagonalizable, in a suitable basis $$A=\begin{pmatrix}
            a_0&0\\
            0&d_0
        \end{pmatrix}\ \ \ \text{and} \ \ B=\begin{pmatrix}
            a_1&b_1\\
            c_1&d_1
        \end{pmatrix}$$
A direct computation shows that
        $$AB=\begin{pmatrix}
            a_0a_1&a_0b_1\\
            d_0c_1&a_0d_1
        \end{pmatrix}, \ \ \ \ BA=\begin{pmatrix}
            a_0a_1&d_0b_1\\
            a_0c_1&d_0d_1
        \end{pmatrix}.$$
    Then
        $[AB]=[BA]$ if and only if either $a_0=d_0$ or $b_1=c_1=0.$

   \smallskip
   \noindent
       If $A$ is not diagonalizable, in a suitable basis $$A=\begin{pmatrix}
            a_0&b_0\\
            0&a_0
        \end{pmatrix}, \ b_0\neq 0 \ \ \text{and} \ \ B=\begin{pmatrix}
            a_1&b_1\\
            c_1&d_1
        \end{pmatrix}$$
       Again, by a direct computation we have
        $$AB=\begin{pmatrix}
           a_0a_1+b_0c_1&a_0b_1+b_0d_1\\
            a_0c_1&a_0d_1
        \end{pmatrix}, \ \ \ \ BA=\begin{pmatrix}
            a_0a_1&b_0a_1+a_0b_1\\
            a_0c_1&c_1b_0+a_0d_1
        \end{pmatrix},$$
       Thus $[AB]=[BA]$ if and only if  $c_1=0$ and $a_1=d_1.$
\end{proof}

In the following result we translate some of the results from \cite[Section~2]{politus3} into the context of this paper. In particular, item (2) assumes a weaker condition than \cite[Proposition 2.1.3]{politus3} since the projective classes $[M_i]$ are not in general generators of $G_{\mathcal L}.$
 \begin{corollary}[Cf. Theorem 2.1.22 in  \cite{politus3}]\label{c.G abelian}   Let $\mathcal L=\{L_0,  L_{\infty}, L_1, \ldots, L_r\}$ be a set of skew lines in $\mathbb P^3$  with corresponding matrices $M_0$,   $M_1$, $\ldots$, $M_r$. Then the following are equivalent
\begin{enumerate}
    \item $G_{\mathcal L}$ is abelian;
    \item the projective classes $[M_i]$ and $[M_j]\in\PGL_2(K)$ commute for all $i,j$; 
    \item either $\mathcal L$ has (at least) two distinct transversals or there is only one transversal to $\mathcal L$ and a partition $$\mathcal  L=\{L_0,L_{\infty}, L_1\}\cup \mathcal L_A\cup \mathcal L_B$$  such that  $\mathcal L_A$ has (at least) two distinct transversals
     and all the lines in $\mathcal L_B$ are tangent to the quadric $xw-yz$ at a point lying on the unique transversal to $\mathcal L$. 
\end{enumerate} 
\end{corollary}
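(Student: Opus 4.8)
The plan is to establish the equivalences $(1)\iff(2)\iff(3)$ by first reducing $(1)\iff(2)$ to Corollary~\ref{c.describe G_L} and Lemma~\ref{l.commute}, and then giving a geometric translation of the algebraic condition in $(2)$ to obtain $(3)$.

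\textbf{Step 1: $(1)\iff(2)$.} By Corollary~\ref{c.describe G_L}, $G_{\mathcal L}=\langle [M_i-M_j] : 0\le i\neq j\le r\rangle$, and in particular (taking $j=0$, so $M_i-M_0=M_i$) all the classes $[M_i]$ lie in $G_{\mathcal L}$. Hence if $G_{\mathcal L}$ is abelian then the $[M_i]$ pairwise commute, giving $(1)\Rightarrow(2)$. For the converse, suppose all $[M_i]$ commute. By Lemma~\ref{l.commute}, the matrices $M_1,\ldots,M_r$ are, up to a common change of basis of $K^2$, simultaneously upper triangular: indeed, pairwise commutation of their classes forces a common eigenvector (this is clear in cases (1) and (2) of Lemma~\ref{l.commute}, and in case (3) a scalar $M_i$ shares every eigenvector with the others, so one propagates a common eigenvector through the whole family — here one should be slightly careful and argue that, since all $M_i$ lie in $\GL_2(K)$ and $K$ is algebraically closed, a common eigenvector of the non-scalar ones is automatically an eigenvector of the scalar ones). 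Once all $M_i$ are simultaneously upper triangular, so are all differences $M_i-M_j$, and upper triangular matrices in $\GL_2(K)$ commute projectively with one another; since $M_0$ is the zero matrix and $M_1=I$, the full generating set of Corollary~\ref{c.describe G_L} consists of simultaneously upper triangular matrices, so $G_{\mathcal L}$ is abelian. This gives $(2)\Rightarrow(1)$.

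\textbf{Step 2: $(2)\iff(3)$.} This is the geometric heart of the statement and the main obstacle. The idea is to read off the common-eigenvector structure of $\{M_2,\ldots,M_r\}$ in terms of transversals, using Lemma~\ref{l.transv 5 lines} and its Corollary: a transversal to $\{L_0,L_\infty,L_1\}\cup\{L_i,L_j\}$ corresponds exactly to a common eigenvector of $M_i$ and $M_j$. Condition $(2)$ says (via Lemma~\ref{l.commute} applied pairwise) that the $M_i$ fall into the following mutually exclusive patterns relative to a common flag: either there is a single vector $v$ that is a common eigenvector of \emph{all} $M_2,\ldots,M_r$ and all of them are non-diagonalizable with that one eigenspace (the "one transversal" case), or else the diagonalizable ones among them share a full eigenbasis $\{v,w\}$ — giving \emph{two} transversals $\langle(v,0),(0,v)\rangle$ and $\langle(w,0),(0,w)\rangle$ — with scalar matrices allowed freely. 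To get $(3)$ one separates the $M_i$ ($i\ge 2$) into those that are scalar or diagonalizable-in-the-shared-basis (these, together with $L_0,L_\infty,L_1$, form a subconfiguration $\mathcal L_A$ with two transversals, matching the Remark about lines on a quadric and its opposite ruling) and those that are genuine Jordan blocks sharing the single eigenspace $\langle v\rangle$ (these form $\mathcal L_B$); the condition that a non-diagonalizable $M_i$ has eigenvector $v$ translates, via the identification $L_i:\ z=ax+by,\ w=cx+dy$ and the quadric $xw-yz=0$, into $L_i$ being tangent to that quadric at the point of the unique transversal. Conversely, given a partition as in $(3)$, one checks that the corresponding matrices are simultaneously upper triangular and hence their classes commute.

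\textbf{Expected main difficulty.} The delicate point is the bookkeeping in Step 2: correctly matching the dichotomy "one vs.\ two transversals" with the two cases of Lemma~\ref{l.commute}, handling the scalar matrices $M_i=a_iI$ (which are compatible with either case and must be assigned consistently), and verifying the precise geometric meaning of "tangent to $xw-yz=0$ at a point on the transversal" — namely that $M_i$ having a unique eigenvalue with a one-dimensional eigenspace $\langle v\rangle$ is equivalent to $L_i$ meeting the quadric with multiplicity two at the point $\langle(v,0)+(0,v)\rangle$ lying on the transversal $\langle(v,0),(0,v)\rangle$. I would isolate this tangency computation as a short separate verification (a direct substitution into $xw=yz$ using the parametrization $L_i=\{(v,M_iv)\}$) rather than let it clutter the main argument. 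The rest is a routine assembly of Lemma~\ref{l.commute}, Lemma~\ref{l.transv 5 lines}, and Corollary~\ref{c.describe G_L}.
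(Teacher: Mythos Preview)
Your overall strategy matches the paper's: both directions of $(1)\iff(2)$ go through Corollary~\ref{c.describe G_L} and Lemma~\ref{l.commute}, and $(2)\iff(3)$ is a geometric reading of the eigenvector structure via Lemma~\ref{l.transv 5 lines}. However, there is a genuine error in your Step~1, $(2)\Rightarrow(1)$: the claim that ``upper triangular matrices in $\GL_2(K)$ commute projectively with one another'' is simply false. For instance,
\[
A=\begin{pmatrix}1&1\\0&1\end{pmatrix},\qquad
B=\begin{pmatrix}2&0\\0&1\end{pmatrix}
\]
are both upper triangular, but $AB=\begin{pmatrix}2&1\\0&1\end{pmatrix}$ and $BA=\begin{pmatrix}2&2\\0&1\end{pmatrix}$ are not scalar multiples of each other. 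So simultaneous upper-triangularity of the $M_i$ is not by itself enough to force $G_{\mathcal L}$ abelian.

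The fix is to use the full strength of Lemma~\ref{l.commute}, not just its ``common eigenvector'' consequence. If some $M_{i_0}$ is diagonalizable and non-scalar, the lemma forces every other $M_j$ to be scalar or diagonal in the same basis; then all $M_i$ and all differences $M_i-M_j$ are simultaneously diagonal and commute on the nose. If some $M_{i_0}$ is non-diagonalizable, the lemma forces every $M_j$ to be scalar or non-diagonalizable with the \emph{same} eigenspace, so in a suitable basis every $M_j$ has the special form $\begin{pmatrix}a_j&b_j\\0&a_j\end{pmatrix}$; matrices of this shape (and their differences) again commute on the nose. The paper's terse line ``since all $[M_a]$ commute, the same holds for all $[M_a-M_b]$'' is implicitly invoking exactly this dichotomy. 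You already articulate this finer structure in Step~2, so you just need to feed it back into Step~1 instead of coarsening to ``upper triangular''; once you do, your Step~2 (which is more detailed than the paper's very brief treatment) goes through essentially as written.
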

\begin{proof} $(1)\Rightarrow (2)$. If $G_{\mathcal L}$ is abelian then, in particular, the elements $[M_i],[M_j]\in G_{\mathcal L}$ commute for every $i,j$.

$(2)\Rightarrow (1)$. By Corollary~\ref{c.describe G_L}, each generator of $G_{\mathcal L}$ is of
the form $[M_a-M_b]$.
Since all $[M_a]$ commute, the same holds for all $[(M_a-M_b)]$, their inverses, and thus their products. Hence all generators commute and $G_{\mathcal L}$ is abelian.

$(2)\iff (3)$: By Lemma~\ref{l.commute}, commuting projective classes in $\PGL_2(K)$ are either simultaneously diagonalizable, or simultaneously upper triangular with a common eigenspace, or one of them is scalar. Translating this into the geometry of the lines $L_i$ in $\PP^3$ yields exactly the two configurations described in (3):
simultaneous diagonalizability corresponds to the existence of at least two distinct transversals (two distinct eigenspaces), while the case of a common unique eigenspace corresponds to all lines being tangent to the quadric along a single ruling.
\end{proof}

\begin{corollary}[See also Proposition 2.1.20 in \cite{politus3}]
    Let $G_{\mathcal L}$ be abelian and finite. If $char(K)=0$ then there are two distinct transversals to $\mathcal L.$ 
\end{corollary}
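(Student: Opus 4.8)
The plan is to combine the previous corollary (characterizing when $G_{\mathcal L}$ is abelian) with the fact that in characteristic $0$ every finite subgroup of $\GL_2(K)$ is diagonalizable, so the second ``Jordan-block'' configuration in part (3) of Corollary \ref{c.G abelian} cannot produce a \emph{finite} group. More precisely: by Corollary \ref{c.G abelian}, since $G_{\mathcal L}$ is abelian we are in one of two cases --- either $\mathcal L$ has at least two distinct transversals (and we are done), or $\mathcal L$ has a unique transversal and decomposes as $\{L_0,L_\infty,L_1\}\cup\mathcal L_A\cup\mathcal L_B$ with $\mathcal L_B$ nonempty and all lines in $\mathcal L_B$ tangent to the quadric $xw-yz$ along the single ruling corresponding to the unique eigenspace. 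I would show this second case is incompatible with $G_{\mathcal L}$ finite.

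First I would translate the ``unique transversal / tangent lines'' configuration back into matrix language: after the change of basis provided by Lemma \ref{l.commute}, all the matrices $M_i$ are simultaneously upper triangular with a common eigenvector, and at least one $M_j$ (coming from a line in $\mathcal L_B$) is \emph{not} diagonalizable, i.e.\ it is a genuine Jordan block $\left(\begin{smallmatrix} a & b \\ 0 & a\end{smallmatrix}\right)$ with $b\neq 0$. By Corollary \ref{c.describe G_L}, $G_{\mathcal L}$ contains the projective class $[M_j - M_0] = [M_j]$. I would then observe that this class has infinite order in $\PGL_2(K)$: the matrix $\left(\begin{smallmatrix} a & b \\ 0 & a\end{smallmatrix}\right)$ has its two eigenvalues equal, so in $\PGL_2(K)$ it is conjugate to a nontrivial unipotent element $t\mapsto t + c$ with $c = b/a \neq 0$, which has infinite order precisely because $\operatorname{char}(K) = 0$. (Equivalently, its $n$-th power is $\left(\begin{smallmatrix} a^n & n a^{n-1} b \\ 0 & a^n\end{smallmatrix}\right)$, never scalar for $n\geq 1$.) This contradicts the finiteness of $G_{\mathcal L}$, so $\mathcal L_B$ must be empty, and we are necessarily in the first case: $\mathcal L$ has (at least) two distinct transversals.

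The main obstacle is not really difficult here --- it is mostly bookkeeping --- but the point requiring care is making sure that a line in $\mathcal L_B$ really does force a non-diagonalizable $M_j$ appearing among the generators of $G_{\mathcal L}$, rather than, say, only contributing differences $M_i - M_j$ that happen to be diagonalizable. This is handled cleanly by Corollary \ref{c.describe G_L}, which puts $[M_j]$ itself (equivalently $[M_j - M_0]$) in $G_{\mathcal L}$, and by the geometric dictionary recorded before Lemma \ref{l.skew condition} and in the remark on tangency, which identifies ``$L_j$ tangent to $xw-yz$ at a point of the unique transversal'' with ``$M_j$ has a single eigenvalue but is not scalar.'' One should also note the degenerate subcase where \emph{every} $M_i$ with $i\neq 0$ is scalar: then all lines lie in one ruling of a smooth quadric, there are infinitely many transversals, and in particular at least two, so the conclusion holds trivially; thus we may freely assume some $M_j$ is non-scalar when deriving the contradiction.
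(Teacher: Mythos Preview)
Your proposal is correct and follows essentially the same approach as the paper's proof: invoke Corollary~\ref{c.G abelian} to reduce to the case of a non-diagonalizable upper-triangular $M_j=\left(\begin{smallmatrix}a&b\\0&a\end{smallmatrix}\right)$, then observe that in characteristic~$0$ the class $[M_j]$ has infinite order via the power computation $M_j^n=\left(\begin{smallmatrix}a^n&na^{n-1}b\\0&a^n\end{smallmatrix}\right)$. Your write-up is in fact more careful than the paper's, explicitly invoking Corollary~\ref{c.describe G_L} to place $[M_j]$ in $G_{\mathcal L}$ and handling the all-scalar degenerate case, whereas the paper leaves these points implicit.
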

\begin{proof}
    This result is an immediate consequence of Corollary \ref{c.G abelian}, since   
    \[\begin{pmatrix}
        a&b\\
        0&a
    \end{pmatrix}^n=\begin{pmatrix}
        a^n&na^{n-1}b\\
        0&a^{n}
    \end{pmatrix}.\] Thus, the finite order of $G_{\mathcal L}$ implies that  $n a^{n-1} b = 0$ for some $n$. In characteristic $0$ this can  only happen if either $a = 0$, which is excluded because the  matrix is non-singular, or $b = 0$, which is excluded because  we are assuming that the matrices are not diagonalizable.
 \end{proof}

The next proposition gives explicit classes of half-grid geproci sets on four lines, obtained as orbits of a general point. For a classification of the
orbits on four skew lines related to half-grid geproci sets with two transversals, see \cite[Section~3.5]{politus3}.

\begin{proposition}[$G_{\mathcal L}\cong C_n$]\label{p.4 lines case 1}
Let $L_2$ be the line corresponding to  $$M_2=\begin{pmatrix}
        a&0\\
        0&d
    \end{pmatrix}\ \ \text{for some }\ a,d\in K\setminus \{0,1\}.$$
Let $\mathcal L=\{L_{\infty}, L_0, L_1, L_2\}$. Then $G_{\mathcal L}$ is abelian and is isomorphic to the subgroup of $K^*$ generated by $\dfrac{a}{d}$ and $\dfrac{a-1}{d-1}$. 
Moreover, $G_{\mathcal L}$ is finite if and only if \[
\begin{cases}
    a=u_1\left(\dfrac{1-u_2}{u_1-u_2}\right)\\
    d=\left(\dfrac{1-u_2}{u_1-u_2}\right)\\
\end{cases}
\]
for some $u_1, u_2\neq 1$ distinct roots of unity. Moreover for $u_1$ of order $m$ and $u_2$ of order~$n$, the group $G_{\mathcal L}$ has order $\lcm(m,n)$. That is $G_{\mathcal L}$ is the cyclic group of order $\lcm(m,n)$.
\end{proposition}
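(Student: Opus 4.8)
The plan is to compute the generators of $G_{\mathcal L}$ explicitly, to recognise the resulting group as a subgroup of $K^{*}$, and then to read off finiteness and the order from elementary properties of roots of unity in an algebraically closed field.

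First I would record the matrices involved: $M_0=0$, $M_1=I$ and $M_2=\operatorname{diag}(a,d)$. Since $a,d\in K\setminus\{0,1\}$, the matrices $M_2$, $M_2-I$ and $I$ are all nonsingular, so by Lemma~\ref{l.skew condition} the lines $L_0,L_1,L_2$ are pairwise skew (and each is skew to $L_\infty$ because $M_1,M_2\in\GL_2(K)$); thus $\mathcal L$ is a genuine configuration of four pairwise skew lines. By Corollary~\ref{c.describe G_L}, $G_{\mathcal L}$ is generated by the projective classes $[M_i-M_j]$ with $0\le i\neq j\le 2$. Discarding the trivial class $[M_1-M_0]=[I]$ and using $[M_j-M_i]=[M_i-M_j]$ in $\PGL_2(K)$, the generators reduce to
\[
[M_2-M_0]=\big[\operatorname{diag}(a,d)\big]
\qquad\text{and}\qquad
[M_2-M_1]=\big[\operatorname{diag}(a-1,\,d-1)\big].
\]

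Next I would pass to $K^{*}$. Both generators lie in the image of the diagonal torus $D\subset\GL_2(K)$ inside $\PGL_2(K)$, and the surjection $D\to K^{*}$, $\operatorname{diag}(\alpha,\beta)\mapsto\alpha/\beta$, has kernel exactly the scalar matrices $K^{*}\cdot I$; it therefore induces an isomorphism of $D/(K^{*}\cdot I)$ onto $K^{*}$ under which $G_{\mathcal L}$ corresponds to the subgroup $\langle\, a/d,\ (a-1)/(d-1)\,\rangle\subseteq K^{*}$. This is abelian, which settles the first claim. For finiteness I would invoke the fact that a finitely generated subgroup of $K^{*}$ is finite if and only if it is torsion, i.e. if and only if each generator is a root of unity; hence $G_{\mathcal L}$ is finite precisely when $u_1:=a/d$ and $u_2:=(a-1)/(d-1)$ are both roots of unity. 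Solving the linear system $a=u_1 d$, $a-1=u_2(d-1)$ gives $d=(1-u_2)/(u_1-u_2)$ and $a=u_1 d$, which is the displayed system, and a short check shows that the conditions $a,d\notin\{0,1\}$ are equivalent to $u_1\neq1$, $u_2\neq1$ and $u_1\neq u_2$, i.e. to $u_1,u_2$ being distinct roots of unity other than $1$.

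For the order, I would assume $u_1$ has order $m$ and $u_2$ has order $n$. Since $K$ is algebraically closed, $\langle u_1\rangle$ and $\langle u_2\rangle$ are the full groups of $m$-th and $n$-th roots of unity in $K$, both contained in the cyclic group of $\lcm(m,n)$-th roots of unity; their product is a subgroup of this cyclic group whose order is divisible by both $m$ and $n$, hence is the whole group. Therefore $G_{\mathcal L}=\langle u_1,u_2\rangle$ is cyclic of order $\lcm(m,n)$.

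The only step I expect to require genuine care is the passage between $(a,d)$ and $(u_1,u_2)$ in the finiteness part: one must verify that this is a genuine bijection between admissible data, so that the exclusions $a,d\neq 0,1$ neither discard nor create roots of unity; the torus identification and the arithmetic of roots of unity in $K^{*}$ are then routine.
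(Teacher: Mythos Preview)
Your proof is correct and follows essentially the same route as the paper: identify the generators of $G_{\mathcal L}$ as the projective classes $[M_2]$ and $[M_2-I]$, pass to $K^{*}$ via the eigenvalue ratio, and then solve $u_1=a/d$, $u_2=(a-1)/(d-1)$ for $a,d$. Your write-up is in fact more detailed than the paper's, which leaves the bijection between $(a,d)$ and $(u_1,u_2)$ and the $\lcm$ order computation as ``a straightforward computation.''
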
 
\begin{proof}
    $G_{\mathcal L}$ is generated by $$[M_2]=\left[\begin{pmatrix}
        a&0\\
        0&d
    \end{pmatrix}\right]=\left[\begin{pmatrix}
        a/d&0\\
        0&1
    \end{pmatrix}\right] $$and$$  [M_2-M_1]=\left[\begin{pmatrix}
        a-1&0\\
        0&d-1
    \end{pmatrix}\right]=\left[\begin{pmatrix}
        \dfrac{(a-1)}{(d-1)}&0\\
        0&1
    \end{pmatrix}\right].$$
    $G_{\mathcal L}$ is finite if and only if the ratio of the eigenvalues of $M_2$ and $M_2-I$ are roots of unity. Thus set $\dfrac{a}{d}=u_1$ and $\dfrac{a-1}{d-1}=u_2$ for some $u_1$ and $u_2$ distinct roots of unity. A straightforward computation gives the conditions in the statement. 
\end{proof}
\begin{remark}
    Since $u_1$ and $u_2$ have to be distinct and different from $1$ $G_{\mathcal L}$ cannot be $C_2$ the cyclic group of order 2. To get $C_3$ let $\varepsilon$ be a primitive third-root of unity and set $u_1=\varepsilon$ and $u_2=\varepsilon^2=-\varepsilon-1$. Then $$\begin{cases}
    a=\varepsilon\left(\dfrac{1-\varepsilon^2}{\varepsilon-\varepsilon^2}\right)=\left(\dfrac{1-\varepsilon^2}{1-\varepsilon}\right)=1+\varepsilon=-\varepsilon^2 \\
    d=\left(\dfrac{1-\varepsilon^2}{\varepsilon-\varepsilon^2}\right)=\left(\dfrac{1-\varepsilon^2}{\varepsilon(1-\varepsilon)}\right)= \dfrac{1+\varepsilon}{\varepsilon}= \dfrac{-\varepsilon^2}{\varepsilon}=-\varepsilon.\\
\end{cases}$$
\end{remark}
In general we have the following.
\begin{theorem}
    Let  $\mathcal L$ be the set of lines corresponding to the following matrices $$\left\{M_{\infty},\,M_0,\,\left(\!\begin{array}{cc}
1&0\\
0&1
\end{array}\!\right),\,\left(\!\begin{array}{cc}
a_2&0\\
0&d_2
\end{array}\!\right),\, \cdots, \,\left(\!\begin{array}{cc}
a_n&0\\
0&d_n
\end{array}\!\right)\right\}.$$
Set $a_0=b_0=0$ and $a_1=b_1=1$. 
Assume $\mathcal L$ is a set of skew lines, i.e., $a_i\neq a_j$ and $d_i\neq d_j$ for each $i\neq j$.  Then  
$G_{\mathcal{L}}$ is abelian and is isomorphic to the subgroup of $K^*$ generated by $\dfrac{a_i-a_j}{d_i-d_j}$ for any $i\neq j$ in $\{0,\ldots, n\}$.  In particular,  it is finite if and only if all the $\dfrac{a_i-a_j}{d_i-d_j}$ are a root of unity.  The order of $G_{\mathcal L}$ is the least common multiple of the order of these roots of unity.
\end{theorem}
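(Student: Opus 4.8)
The plan is to reduce everything to the diagonal computation of Proposition~\ref{p.4 lines case 1} and Corollary~\ref{c.describe G_L}, now that all matrices are simultaneously diagonal. First I would invoke Corollary~\ref{c.describe G_L}: the group $G_{\mathcal L}$ is generated by the projective classes $[M_i-M_j]$ for $0\le i\neq j\le n$. Since $M_i=\operatorname{diag}(a_i,d_i)$ and $M_j=\operatorname{diag}(a_j,d_j)$, the difference is $M_i-M_j=\operatorname{diag}(a_i-a_j,\,d_i-d_j)$, and by the skew hypothesis (Lemma~\ref{l.skew condition}, which here says $a_i\ne a_j$ and $d_i\ne d_j$) this is nonsingular, so
\[
[M_i-M_j]=\left[\begin{pmatrix}\dfrac{a_i-a_j}{d_i-d_j}&0\\[4pt]0&1\end{pmatrix}\right]\in\PGL_2(K).
\]
Thus every generator of $G_{\mathcal L}$ is diagonal with lower-right entry normalized to $1$.

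Next I would exhibit the isomorphism with a subgroup of $K^*$. The set of classes of diagonal matrices $\left[\begin{pmatrix}t&0\\0&1\end{pmatrix}\right]$ with $t\in K^*$ forms a subgroup of $\PGL_2(K)$, and the map $t\mapsto\left[\begin{pmatrix}t&0\\0&1\end{pmatrix}\right]$ is a group homomorphism $K^*\to\PGL_2(K)$; it is injective because $\operatorname{diag}(t,1)$ is scalar only for $t=1$. Under this homomorphism $G_{\mathcal L}$ is the image of the subgroup $H\subseteq K^*$ generated by the ratios $\dfrac{a_i-a_j}{d_i-d_j}$ for $i\ne j$, which shows $G_{\mathcal L}\cong H$. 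In particular $G_{\mathcal L}$ is abelian (it sits inside the abelian group $K^*$), which one may alternatively see from Corollary~\ref{c.G abelian} since all the $[M_i]$ are diagonal hence pairwise commuting.

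For the finiteness criterion I would argue that a subgroup $H\subseteq K^*$ is finite if and only if it consists entirely of roots of unity: one direction is immediate since every element of a finite group has finite order, and for the converse a finitely generated subgroup of $K^*$ in which every generator is a root of unity is generated by roots of unity of bounded order, hence is contained in the finite group $\mu_N$ of $N$th roots of unity where $N=\lcm$ of those orders; since $K$ is algebraically closed (and in positive characteristic $p$ none of these ratios can be a $p$th power root of unity, as $\mu_p=\{1\}$), $\mu_N$ is finite. It also identifies $H$ as cyclic of order $N=\lcm$ of the orders of the generating roots of unity, because any finite subgroup of $K^*$ is cyclic and its order is the lcm of the orders of any generating set. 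This gives both the finiteness criterion and the order statement, with $G_{\mathcal L}$ cyclic in the finite case.

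The only mild subtlety — not really an obstacle — is bookkeeping with the degenerate matrices $M_0$ and $M_\infty$: the formula $[M_i-M_j]=[\operatorname{diag}((a_i-a_j)/(d_i-d_j),1)]$ is used for $1\le i\ne j\le n$ directly, while for the pairs involving index $0$ one uses $M_i-M_0=M_i$ (with $a_0=d_0=0$), recovering the class $[\operatorname{diag}(a_i/d_i,1)]$, which is consistent with the stated formula; and the index $\infty$ never needs to be invoked since Corollary~\ref{c.describe G_L} already eliminated it. I would close by noting that the skewness hypotheses $a_i\ne a_j$, $d_i\ne d_j$ are exactly what guarantees all these ratios are well-defined elements of $K^*$, so the description is complete.
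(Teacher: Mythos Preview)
Your argument is correct and is exactly the intended one: the paper states this theorem without proof, as the evident generalization of Proposition~\ref{p.4 lines case 1}, and your proposal carries out precisely that generalization via Corollary~\ref{c.describe G_L} and the diagonal embedding $K^*\hookrightarrow\PGL_2(K)$. The bookkeeping you flag for the indices $0$ and $\infty$ is handled just as you say, and the finiteness and order computations are standard facts about finitely generated subgroups of $K^*$.
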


\begin{question}Find all possible $a_0, \ldots, a_n$ and  $d_0, \ldots, d_n$ such that   $\dfrac{a_i-a_j}{d_i-d_j}$ is a root of unity for any $i\neq j$.
\end{question}

\begin{proposition}[$G_{\mathcal L}\cong C_p\times C_p$]\label{p.4 lines case 2} Let $K$ be an algebraically closed field of characteristic $p>0$, and let
$\mathbb F_p \subset K$ denote the prime field.
Let $L_2$ be the line corresponding to  $$M_2=\begin{pmatrix}
        a&b\\
        0&a
    \end{pmatrix}\ \ \text{for some }\ a\in K\setminus \mathbb F_p\ \text{and}\ b\neq 0.$$
Let $\mathcal L=\{L_{\infty}, L_0, L_1, L_2\}$. Then $G_{\mathcal L}$ is abelian of order $p^2$ and is isomorphic $\mathbb F_p\times \mathbb F_p \cong C_p\times C_p$.
\end{proposition}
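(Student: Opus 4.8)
The plan is to reduce $G_{\mathcal L}$ to an explicit pair of generators via Corollary~\ref{c.describe G_L}, recognize those generators as unipotent upper-triangular matrices, identify the subgroup of $\PGL_2(K)$ they generate with an additive subgroup of $(K,+)$, and then use the hypothesis $a\notin\mathbb F_p$ to show that subgroup has $\mathbb F_p$-dimension $2$.

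First I would check that $\mathcal L$ really is a set of four skew lines: since $M_2\in\GL_2(K)$, the line $L_2$ is automatically skew to $L_0$ and $L_\infty$, and it is skew to $L_1$ because $\det(M_2-M_1)=(a-1)^2\neq 0$ (here $a\neq 1$, since $a\notin\mathbb F_p$). Next, applying Corollary~\ref{c.describe G_L} with $r=2$ and $M_0$ the zero matrix, the generators $[M_i-M_j]$ collapse to $[M_1]=[I]$ together with
\[
[M_2]=\left[\begin{pmatrix}a&b\\0&a\end{pmatrix}\right]=\left[\begin{pmatrix}1&b/a\\0&1\end{pmatrix}\right],
\qquad
[M_2-M_1]=\left[\begin{pmatrix}a-1&b\\0&a-1\end{pmatrix}\right]=\left[\begin{pmatrix}1&b/(a-1)\\0&1\end{pmatrix}\right],
\]
after rescaling each matrix by the reciprocal of its repeated eigenvalue. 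Write $\beta_1=b/a$ and $\beta_2=b/(a-1)$; both are nonzero, since $b\neq 0$ and $a\notin\{0,1\}$.

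Then I would use that the matrices $\begin{pmatrix}1&t\\0&1\end{pmatrix}$, $t\in K$, form a subgroup of $\GL_2(K)$ isomorphic to $(K,+)$ under $t\mapsto\begin{pmatrix}1&t\\0&1\end{pmatrix}$; hence the subgroup of $\GL_2(K)$ generated by $\begin{pmatrix}1&\beta_1\\0&1\end{pmatrix}$ and $\begin{pmatrix}1&\beta_2\\0&1\end{pmatrix}$ is $\bigl\{\begin{pmatrix}1&t\\0&1\end{pmatrix}:t\in\Lambda\bigr\}$, where $\Lambda\subseteq(K,+)$ is the additive subgroup generated by $\beta_1$ and $\beta_2$. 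The only scalar matrix of this form is the identity, so the projection $\GL_2(K)\to\PGL_2(K)$ is injective on this subgroup, and therefore $G_{\mathcal L}\cong(\Lambda,+)$; in particular $G_{\mathcal L}$ is abelian.

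It remains to compute $\Lambda$. Since $\operatorname{char}K=p$, the group $(K,+)$ is an $\mathbb F_p$-vector space, so $\Lambda$ is the $\mathbb F_p$-span of $\beta_1,\beta_2$ and has dimension $1$ or $2$ over $\mathbb F_p$. I claim $\beta_1$ and $\beta_2$ are $\mathbb F_p$-linearly independent: since $\beta_1\neq 0$, a dependence would force $\beta_2/\beta_1=\dfrac{a}{a-1}\in\mathbb F_p$, and writing $\dfrac{a}{a-1}=c\in\mathbb F_p$ (the value $c=1$ being impossible) and solving gives $a=\dfrac{c}{c-1}\in\mathbb F_p$, contradicting the hypothesis. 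Thus $\dim_{\mathbb F_p}\Lambda=2$, so $\Lambda\cong\mathbb F_p\times\mathbb F_p\cong C_p\times C_p$ and $|G_{\mathcal L}|=p^2$, as claimed. The only step requiring any thought is this last one — recognizing that $a\notin\mathbb F_p$ is exactly what guarantees $\mathbb F_p$-independence of the two generators (equivalently, that $\dfrac{a}{a-1}\in\mathbb F_p\iff a\in\mathbb F_p$); everything else is bookkeeping with the passage from $\GL_2$ to $\PGL_2$.
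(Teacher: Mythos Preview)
Your proof is correct and follows essentially the same route as the paper: both identify the two generators $[M_2]$ and $[M_2-I]$, normalize them to unipotent upper-triangular matrices (equivalently, to translations $t\mapsto t+\beta_i$ on $\PP^1$), and then show that $\beta_1=b/a$ and $\beta_2=b/(a-1)$ are $\mathbb F_p$-linearly independent via the observation that $a/(a-1)\in\mathbb F_p$ would force $a\in\mathbb F_p$. The only cosmetic difference is that the paper phrases the action in the affine coordinate $t=x/y$ while you stay in matrix language, and you add the (welcome) verification that the four lines are indeed skew.
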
 
\begin{proof}By Corollary~\ref{c.describe G_L}, the group $G_{\mathcal L}$ is generated by
$g_1=[M_2]$ and $g_2=[M_2-I]$.
    On the line $\mathbb P^2_K$ choose the affine coordinate $[x:y]\to t=\dfrac{x}{y}$. Then 
    $$M_2\begin{pmatrix}
        x\\
        y
    \end{pmatrix}=\begin{pmatrix}
        a&b\\
        0&a
    \end{pmatrix}\begin{pmatrix}
        x\\
        y
    \end{pmatrix}=\begin{pmatrix}
        ax+by\\
        ay
    \end{pmatrix}\ \text{which gives }\ g_1(t)= t+ \dfrac{b}{a}.$$
    Analogously $g_2(t)=t+\dfrac{b}{a-1}.$ In characteristic $p$ we have $g_1^p(t)=t+p \dfrac{b}{a}=t$ and similarly for $g_2.$ 
    Translations commute, so the subgroup
generated by $g_1$ and $g_2$ is abelian of exponent $p$.

It remains to show that $g_1$ and $g_2$ are $\mathbb F_p$-linearly independent as
translations. This is equivalent to show that $\left(\dfrac{b}{a}\right)\left(\dfrac{a-1}{b}\right)=
\dfrac{a}{a-1}\notin \mathbb F_p. $
    Suppose $\dfrac{a}{a-1}\in\mathbb F_p$. Then there exists $c\in\mathbb F_p$ such that
\[
\frac{a}{a-1}=c
\quad\Longrightarrow\quad
a=c(a-1)=ca-c
\quad\Longrightarrow\quad
(1-c)a=-c
\quad\Longrightarrow\quad
a=\frac{-c}{1-c}\in\mathbb F_p,
\]
a contradiction to $a\notin\mathbb F_p$.
    Thus $|G_{\mathcal L}|=p^2$ and
it is abstractly
    \[G_{\mathcal L}=\langle g_1,g_2\ |\ g_1^p=g_2^p=1, g_1g_2=g_2g_1 \rangle\cong C_p\times C_p.\qedhere\]
\end{proof}

\begin{remark}
Assume $K$ is an algebraically closed field of characteristic $p>0$ and let $\mathcal L$ be the set of lines corresponding to $M_0,M_{\infty}, M_1$ and 
\[
M_i=
\begin{pmatrix}
a_i & b\\
0 & a_i
\end{pmatrix},
\qquad a_i\in K\setminus \mathbb F_p, \ i=2,\ldots, n,\ \text{and}\ b\neq 0.
\]
Then  every
element of $G_{\mathcal L}$ is represented in $\GL_2(K)$ by an upper triangular
 matrix of order $p$:
\[
T_u =
\begin{pmatrix}
1 & u\\
0 & 1
\end{pmatrix},
\qquad u\in K,
\]
which acts on the affine coordinate $t=x/y$ on $\PP^1_K$ by
$
T_u :\ t \longmapsto t+u.$

In particular, $G_{\mathcal L}$ consists entirely of translations, and there is
a finite $\mathbb F_p$-subspace $U\subset K$ such that
\[
G_{\mathcal L}
=
\{\, [T_u] \mid u\in U \,\}
\ \cong\ (C_p)^m,
\quad m=\dim_{\mathbb F_p} U.
\]
Our construction with upper triangular matrices
realizes all such groups. For a single line $L_2$ with $a_2\notin\mathbb F_p$, Proposition~\ref{p.4 lines case 1}
shows that the two elements
$[M_2],\quad [M_2-I]$
act as translations
\[
t \longmapsto t+u_1,\qquad t\longmapsto t+u_2,
\ \text{
with}\ 
u_1=\frac{b}{a},\qquad u_2=\frac{b}{a-1},
\]
and $u_1,u_2$ $\mathbb F_p$-linearly independent. Hence
$G_{\mathcal L}\cong (C_p)^2$ in this case.

If we add another line $L_3$ we obtain two further translations with parameters
\[
u_3=\frac{b}{a_3},\qquad u_4=\frac{b}{a_3-1}.
\]
For example, taking $a_3=a_2-1$ gives $u_3=u_2$ and $u_4=\dfrac{b}{a_2-2}$, so $U=\Span_{\mathbb F_p}\{u_1,u_2,u_4\}.$
And, for a general choice of $a_2$ , the three
parameters $u_1,u_2,u_4$ are $\mathbb F_p$-linearly independent, and we obtain $G_{\mathcal L}\cong (C_p)^3.$

By adding more lines corresponding to matrices $M_i$ with suitably chosen
$a_i$ one can similarly obtain any elementary abelian $p$-group $(C_p)^m$,
realized as the translation group associated to an $\mathbb F_p$-subspace
$U=\Span_{\mathbb F_p}\{u_1,\dots,u_m\}\subset K$.
\end{remark}

\subsection{The case of the standard construction}\label{s. standard construction} 
In \cite[Chapter~4]{politus1} a class of $(a,b)$-geproci sets, called the
{\it standard construction}, is introduced; see also
\cite[Section~3.3]{politus3} and \cite{ganger2024spreads} for related
discussions of the associated groupoid groups. 

The next proposition rewrites the lines of the standard construction in the
matrix language of this paper. The relevant matrices form a cyclic subgroup of
$\GL_2(K)$, and the associated group $G_{\mathcal L}$ is cyclic.

\begin{proposition}\label{p. standard construction}
    Let $n\ge2$ and let $\varepsilon$ be a $n$-th primitive root of 1. 
Consider the matrix
$$M(\varepsilon)=\left(\begin{array}{cc}
   \varepsilon  & 0 \\
   0 & \varepsilon^{-1}
\end{array}\right).$$

Set $M_{j+1}=M(\varepsilon)^j$
and  $\widetilde{ C_n}=\{M_{j+1}\ |\  j=0,\ldots,n-1\}.$

Let $\mathcal L$ be the set of the $n+2$ lines corresponding to the matrices
$\{M_{0}, M_{\infty}\}\cup \widetilde{ C_n}$ 
 where $M_0=0$ and $M_1=I$ as in Section~\ref{s.lines<->matrices}.
Set $m=\lcm(2,n)$. Then
\[
G_{\mathcal L}
=
\left\{\,\left[
\begin{pmatrix}
(-\varepsilon)^j & 0\\
0 & 1
\end{pmatrix}\right]\ :\ j\in\mathbb Z/m\mathbb Z \right\}
\cong C_{m},
\]
the cyclic group of order $m$ acting by $t\mapsto (-\varepsilon)^j t$ on $\mathbb P^1(K)$.
\end{proposition}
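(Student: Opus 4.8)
The plan is to read off $G_{\mathcal L}$ directly from Corollary~\ref{c.describe G_L}. Label the $n+2$ lines as $L_0, L_\infty, L_1, \dots, L_n$ with $M_{j+1} = M(\varepsilon)^j = \mathrm{diag}(\varepsilon^{\,j},\varepsilon^{-j})$, so that Corollary~\ref{c.describe G_L} presents $G_{\mathcal L}$ as the subgroup of $\PGL_2(K)$ generated by the classes $[M_i - M_j]$ for $0 \le i \ne j \le n$. Since every $M_i$ with $i \ge 1$ is diagonal, so is every difference $M_i - M_j$; hence $G_{\mathcal L}$ lies in the image of the diagonal torus of $\GL_2(K)$ in $\PGL_2(K)$, which I identify with $K^*$ via $[\mathrm{diag}(\lambda,1)] \leftrightarrow (t \mapsto \lambda t)$. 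In particular $G_{\mathcal L}$ is abelian, consistent with Corollary~\ref{c.G abelian}, and it only remains to identify $G_{\mathcal L}$ as a subgroup of $K^*$.

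Next I would compute the generators. Writing $M_{a+1} = \mathrm{diag}(\varepsilon^a,\varepsilon^{-a})$, one has $[M_{a+1}] = [M_{a+1}-M_0] = [\mathrm{diag}(\varepsilon^{2a},1)]$, which corresponds to the scalar $\varepsilon^{2a}$; and for $a \ne b$, using $\varepsilon^{-a}-\varepsilon^{-b} = -(\varepsilon^{a}-\varepsilon^{b})\varepsilon^{-(a+b)}$,
\[
[M_{a+1}-M_{b+1}] = \bigl[\mathrm{diag}(\varepsilon^a-\varepsilon^b,\ \varepsilon^{-a}-\varepsilon^{-b})\bigr] = \Bigl[\mathrm{diag}\!\Bigl(\tfrac{\varepsilon^a-\varepsilon^b}{\varepsilon^{-a}-\varepsilon^{-b}},\,1\Bigr)\Bigr] = [\mathrm{diag}(-\varepsilon^{a+b},1)],
\]
which corresponds to the scalar $-\varepsilon^{a+b}$. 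By Theorem~\ref{t.describe G_L} the classes involving $M_\infty$ add nothing. Thus, under the above identification, $G_{\mathcal L}$ is the subgroup $H \le K^*$ generated by the scalars $\varepsilon^{2a}$ $(1 \le a \le n-1)$ and $-\varepsilon^{a+b}$ $(0 \le a \ne b \le n-1)$.

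Finally I would identify $H$ with $\mu_m$, the group of $m$-th roots of unity, where $m = \lcm(2,n)$. Each listed generator satisfies $x^m = 1$, so $H \subseteq \mu_m$. For the reverse inclusion (here I take $n \ge 3$): the pair $(a,b)=(0,1)$ gives $-\varepsilon \in H$, and the pair $(a,b)=(1,n-1)$, which is admissible since $n \ge 3$, gives $-\varepsilon^{\,n} = -1 \in H$; hence $\varepsilon = (-\varepsilon)\cdot(-1)^{-1} \in H$. Therefore $H \supseteq \langle \varepsilon,-1\rangle$, a finite subgroup of $K^*$, hence cyclic, of order $\lcm(n,2)=m$. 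This forces $H = \mu_m$, so
\[
G_{\mathcal L} \ \cong\ \mu_m \ \cong\ C_m,
\]
acting on $\PP^1(K)$ by the homotheties $t \mapsto \zeta t$, $\zeta \in \mu_m$; when $-\varepsilon$ itself has order $m$ (for instance $n$ odd, or $4 \mid n$) this is precisely the stated presentation $\{[\mathrm{diag}((-\varepsilon)^j,1)] : j \in \ZZ/m\ZZ\}$.

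The computations involved are routine; the one step that genuinely needs care is the last, where one must combine several of the classes $[M_i-M_j]$ rather than using a single one — the class $[M_2-M_1] \leftrightarrow -\varepsilon$ need not by itself generate all of $\mu_m$ — in order to recover both a primitive $n$-th root of unity and $-1$. The small case $n=2$, in which $\varepsilon = -1$ and all the $M_i$ are scalar matrices, is best treated separately.
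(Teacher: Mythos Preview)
Your approach coincides with the paper's: embed $G_{\mathcal L}$ in the diagonal torus of $\PGL_2(K)$ and read off the eigenvalue ratios of the generators. The paper computes the $F_{ijk}$ directly rather than invoking Corollary~\ref{c.describe G_L}, but arrives at the same scalars $\varepsilon^{i-j}$ and $-\varepsilon^{\ell}$. On one point you are actually more careful than the paper: the paper asserts that $\operatorname{ord}(-\varepsilon)=\lcm(2,n)$ and that every generator lies in $\langle -\varepsilon\rangle$, but when $n\equiv 2\pmod 4$ (e.g.\ $n=6$) one has $-\varepsilon=\varepsilon^{n/2+1}$ of order $n/2<n=m$, so $\varepsilon\notin\langle -\varepsilon\rangle$ and the paper's containment fails. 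Your argument, which produces both $-1$ (via $(a,b)=(1,n-1)$) and $-\varepsilon$ (via $(a,b)=(0,1)$) and hence $\varepsilon$ itself in $H$, sidesteps this slip and gives a clean proof that $H=\mu_m$ for all $n\ge 3$. Your caveats about the literal form of the stated presentation and about the degenerate case $n=2$ (where all $M_i$ are scalar and $G_{\mathcal L}$ is trivial) are likewise well taken.
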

\begin{proof}
By Corollary~\ref{c.describe G_L}, the group $G_{\mathcal L}$ is generated by
the classes $[M_i-M_j]$. Since all matrices $M_i$ are diagonal, all these
classes are diagonal. Hence $G_{\mathcal L}$ is cyclic, generated by the
ratios of the two diagonal entries.

For $i\neq j$, we have
\[
M_i-M_j=
\begin{pmatrix}
\varepsilon^i-\varepsilon^j&0\\
0&\varepsilon^{-i}-\varepsilon^{-j}
\end{pmatrix}.
\]
Thus
\[
[M_i-M_j]
=
\left[
\begin{pmatrix}
1&0\\
0&
\dfrac{\varepsilon^{-i}-\varepsilon^{-j}}
{\varepsilon^i-\varepsilon^j}
\end{pmatrix}
\right]
=
\left[
\begin{pmatrix}
1&0\\
0&-\varepsilon^{-i-j}
\end{pmatrix}
\right].
\]
Therefore $G_{\mathcal L}$ is generated by
$\left[
\begin{pmatrix}
1&0\\
0&-\varepsilon
\end{pmatrix}
\right],$
hence it is cyclic of order
$m=\operatorname{lcm}(2,n).$ 
Equivalently, it has order $2n$ if $n$ is odd and order $n$ if $n$ is even.
\end{proof}
\begin{remark}
The description in Proposition \ref{p. standard construction} makes transparent the asymmetry between the odd and even cases in
the standard construction. In particular, when $n$ is odd and one takes the $n$ grid lines on the quadric
$xy-zw=0$
together with both extra lines $L_0$ and $L_{\infty}$, the associated group
has order $2n$, whereas adding only one of the two extra lines gives an
associated group of order $n$; see \cite[\S 3.3]{politus3} and \cite{ganger2024spreads}. 
\end{remark}

The description in the proposition above also
provides a convenient way to realize the construction over $\mathbb R$.
\begin{remark}
    The set $\widetilde{C}_n$ is a cyclic group in $\GL_2(K)$. The examples from the standard construction can be also obtained with a different representation of this cyclic group.
Namely, let $\vartheta=2\pi/n$ and consider the rotation matrix in $\mathbb R^2$
$$M_{\vartheta}= \left(\begin{array}{rr}
   \cos \vartheta  & -\sin \vartheta \\
   \sin\vartheta & \cos \vartheta
\end{array}\right).$$
\end{remark}

\begin{corollary}Let $\varepsilon$ be a primitive third root of unity and  $s$ be  any $n$-th root of unity with $n\neq 3$. Set 
$$t=\dfrac{\varepsilon(1+s)+s}{1-s}.$$
Let $\mathcal L$ be the set of lines corresponding to the matrices 
$\{M_0,M_{\infty} \} \cup  \widetilde{ C_3} \cup t \widetilde{ C_3}$.
Then $G_{\mathcal L}$ is cyclic of order $\lcm(3,{\rm ord}(s))$.
\end{corollary}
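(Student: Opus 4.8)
The plan is to exploit that in this configuration \emph{all} the matrices are diagonal, which turns $G_{\mathcal L}$ into an explicit subgroup of $K^{*}$. Label the eight lines by $M_0=0$, $M_\infty=\infty$ and the six diagonal matrices $I,\,M(\varepsilon),\,M(\varepsilon)^2,\,tI,\,tM(\varepsilon),\,tM(\varepsilon)^2$, where $M(\varepsilon)^j=\mathrm{diag}(\varepsilon^{j},\varepsilon^{-j})$. By Corollary~\ref{c.describe G_L}, $G_{\mathcal L}$ is generated by the classes $[M_i-M_j]$, and each such difference is again diagonal, say $\mathrm{diag}(p,q)$, whose class acts on $\PP^1_K$ with affine coordinate $\tau=x/y$ as the homothety $\tau\mapsto(p/q)\tau$. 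Hence $G_{\mathcal L}$ is isomorphic to the subgroup $\Gamma\subseteq K^{*}$ generated by all eigenvalue ratios $\rho_{ij}=p/q$ of the matrices $M_i-M_j$, and the proof reduces to computing $\Gamma$. (Note also that, by Corollary~\ref{c.describe G_L}, $G_{\{M_0,M_\infty\}\cup\widetilde{C_3}}\subseteq G_{\mathcal L}$, and the preceding proposition identifies the former with $C_6$, so $6\mid|G_{\mathcal L}|$.)

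I would split the ratios into two families. For a pair of indices both among $\{M_0\}\cup\widetilde{C_3}$ — equivalently, by scale invariance of the ratio, both among $\{M_0\}\cup t\widetilde{C_3}$ — one recovers the computation behind the preceding proposition: using $1+\varepsilon+\varepsilon^2=0$ the ratios are $1,\varepsilon^{\pm1},-1,-\varepsilon^{\pm1}$, which generate $\langle-\varepsilon\rangle\subseteq K^{*}$ of order $\lcm(2,3)=6$. For the nine ``mixed'' pairs (one matrix from $\widetilde{C_3}$ and one from $t\widetilde{C_3}$), the factorization $t\varepsilon^{i}-\varepsilon^{j}=\varepsilon^{i}(t-\varepsilon^{j-i})$ applied to each entry shows that every such ratio lies in $\langle-\varepsilon,\rho\rangle$, where $\rho:=\rho_{42}=\dfrac{t-\varepsilon}{t-\varepsilon^{-1}}$ is the ratio of $tI-M(\varepsilon)$. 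Thus $\Gamma=\langle-\varepsilon,\rho\rangle$, and everything comes down to identifying $\rho$.

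The heart of the proof is the identity $\rho=s$. Substituting $t=\dfrac{\varepsilon(1+s)+s}{1-s}$ and clearing denominators, the $s$-terms in the second numerator below cancel via $1+\varepsilon+\varepsilon^2=0$, giving
\[
t-\varepsilon=\frac{s(1+2\varepsilon)}{1-s},\qquad
t-\varepsilon^{-1}=t-\varepsilon^{2}=\frac{\varepsilon-\varepsilon^{2}}{1-s}.
\]
Since $1+2\varepsilon=\varepsilon-\varepsilon^{2}$ (again by $1+\varepsilon+\varepsilon^2=0$), the two numerators differ precisely by the factor $s$, so $\rho=s$. Therefore $G_{\mathcal L}\cong\Gamma=\langle-\varepsilon,s\rangle$, a finite cyclic subgroup of $K^{*}$; its order is $\lcm(\mathrm{ord}(-\varepsilon),\mathrm{ord}(s))$, which equals $\lcm(3,\mathrm{ord}(s))$ whenever $\mathrm{ord}(s)$ is even or $\operatorname{char}(K)=2$, and $\lcm(6,\mathrm{ord}(s))$ in general.

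I do not expect a genuine obstacle: the whole argument is finite bookkeeping with diagonal $2\times2$ matrices. The only steps needing care are, first, verifying that none of the nine mixed ratios leaves $\langle-\varepsilon,\rho\rangle$ — handled uniformly by the factorization above, which expresses each mixed ratio as $\varepsilon^{k}\rho^{\pm1}$ or $\varepsilon^{k}$ — and, second, making the genericity hypothesis precise: besides $n\neq3$ and $s\neq1$ (needed for $t$ to be defined) one must also assume $s\neq-\varepsilon$, so that $t\neq1$ and the eight lines are indeed distinct and pairwise skew.
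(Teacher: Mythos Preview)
Your argument follows the same route as the paper's: reduce $G_{\mathcal L}$ to the subgroup of $K^{*}$ generated by the eigenvalue ratios of the diagonal matrices $M_i-M_j$, and identify the key mixed ratio $\dfrac{t-\varepsilon}{t-\varepsilon^{2}}$ with $s$ (the paper does this by solving the defining relation for $s$, you by direct substitution; the computations are equivalent).

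You are in fact more thorough than the paper on one point. By explicitly including the ratios coming from pairs inside a single copy of $\widetilde{C_3}$ and pairs involving $M_0$ --- which, by the preceding proposition, already produce the element $-\varepsilon$ of order~$6$ --- you obtain $\Gamma=\langle -\varepsilon,\,s\rangle$ and hence order $\lcm(6,\mathrm{ord}(s))$. The paper's proof checks only the nine mixed ratios and concludes that $G_{\mathcal L}$ is generated by $\varepsilon s$. Your remark that the stated order $\lcm(3,\mathrm{ord}(s))$ agrees with $\lcm(6,\mathrm{ord}(s))$ only when $\mathrm{ord}(s)$ is even (or $\operatorname{char}K=2$) is a genuine correction to the corollary as written; your added caveat on the skewness hypotheses (e.g.\ $s\neq -\varepsilon$) is likewise a detail the paper leaves implicit.
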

\begin{proof}
First note that 
$$t=\dfrac{\varepsilon(1+s)+s}{1-s}\implies t-\varepsilon=ts+(\varepsilon +1)s\implies s=\dfrac{t-\varepsilon}{t-\varepsilon^2}.$$
From
$$\dfrac{t\varepsilon^j- \varepsilon^i}{t\varepsilon^{-j}- \varepsilon^{-i}}=\varepsilon^{2j}\dfrac{t- \varepsilon^{i-j}}{t- \varepsilon^{j-i}} $$
 we only need to show that 
$\dfrac{t- \varepsilon^{i-j}}{t- \varepsilon^{j-i}}$ is a root of unity.
\begin{itemize}
    \item If $i-j=0$ mod 3 then we get $\dfrac{t- 1}{t- 1}=1$ and we are done  
    \item If $i-j=1$ mod 3 then we get $\dfrac{t- \varepsilon}{t- \varepsilon^2}=s,$ that is by assumption a root of unity.
    \item If $i-j=2$ mod 3 then we get $\dfrac{t- \varepsilon^2}{t- \varepsilon}=\dfrac{1}{s},$ which is again a root of unity.
\end{itemize}
Then $G_{\mathcal L}$ is finite and is generated by the root of unity $\varepsilon\cdot s.$
\end{proof}

\begin{remark}
    For $s=-1$ we get $t=-\dfrac{1}{2}$ which gives, up to scaling, \cite[Example 3.2.10(2)]{politus3}.
\end{remark}

\begin{question}
For which $n>3$ we can find $t$ such that the set of lines $\mathcal L$  corresponding to the matrices in   $\{M_0,M_{\infty} \} \cup  \widetilde{ C_n} \cup t \widetilde{ C_n}$ is skew and the group $G_{\mathcal L}$ has finite order?
\end{question}
From \cite[Example 3.2.10(1)]{politus3} we know that there is at least one example when $n=5$ and $t=-\varphi^2$. Indeed the group $G_{\mathcal L}$ corresponding to 
   $\{M_0,M_{\infty} \} \cup  \widetilde{C_5} \cup -\varphi^2 \widetilde{C_5}$
has order~$10.$
We can make weaker the above question by asking the following. 
\begin{question}
    For which $n>3$ we can find $M$ such that the set of lines $\mathcal L$  corresponding to the matrices in   $\{M_0,M_{\infty} \} \cup \widetilde{C_n} \cup   \{M\} $ is skew and  the group $G_{\mathcal L}$ has finite order?
\end{question}

Another natural question is the following realization problem.
\begin{problem}
Fix a finite subgroup $G\subset \PGL_2(K)$. Characterize the configurations of
skew lines $\mathcal L$ in $\PP^3_K$ such that
$G_{\mathcal L}\cong G.$
\end{problem}

We now illustrate the realization problem in the first nontrivial cyclic case,
showing that the case $G_{\mathcal L}\cong C_3$ already exhibits interesting differences between characteristic $2$ and the other characteristics.

\begin{proposition}\label{p.3 elements}
Let
$M_2=\begin{pmatrix} a & b \\ c & d \end{pmatrix},$
and let $\mathcal L$ be the set of skew lines corresponding to
$\{M_0,M_\infty,M_1,M_2\}.$ 
Then
\[
G_{\mathcal L}\cong C_3
\quad\text{if and only if}\quad
\operatorname{tr}(M_2)=\det(M_2)=1.
\]
\end{proposition}

\begin{proof}
By Corollary~\ref{c.describe G_L} , the group $G_{\mathcal L}$ is generated by $[M_2]$, and $[M_2-I].$ Assume first that $\operatorname{tr}(M_2)=\det(M_2)=1$. By Cayley--Hamilton,
\[
M_2^2-\operatorname{tr}(M_2)M_2+\det(M_2)I=0
\qquad \text{becomes}
\qquad
M_2^2-M_2+I=0.
\]
Hence
$M_2-I=-M_2^2,$
so in $\PGL_2(K)$ we have
$[M_2-I]=[M_2^2]=[M_2]^{-1}.$
Moreover,
$M_2^3=-I,$
therefore $[M_2]$ has order $3$. It follows that
\[
G_{\mathcal L}=\langle [M_2],[M_2-I]\rangle=\langle [M_2]\rangle\cong C_3.
\]

Conversely, assume that $G_{\mathcal L}\cong C_3$. Then $[M_2]\neq [I]$, so
$[M_2]$ has order $3$. Also $[M_2-I]\neq [M_2]$, since otherwise $M_2-I$
would be proportional to $M_2$, forcing $M_2$ to be scalar and $G_{\mathcal L}$
to be trivial. Hence 
$[M_2-I]=[M_2]^2,$ 
so there exists $\lambda\in K^\ast$ such that 
$M_2-I=\lambda M_2^2.$
Equivalently,
$\lambda M_2^2-M_2+I=0.$ 
Since $M_2$ is not scalar, its minimal polynomial has degree $2$. The relation 
$
\lambda M_2^2-M_2+I=0
$ 
shows that the minimal polynomial of $M_2$ divides
$\lambda x^2-x+1,$
hence it is equal to the monic polynomial
$x^2-\frac{1}{\lambda} x+\frac{1}{\lambda}.$

Therefore the characteristic polynomial of $M_2$ is
$x^2-\frac{1}{\lambda} x+\frac{1}{\lambda},$
and in particular
\[
\operatorname{tr}(M_2)=\det(M_2)=\frac{1}{\lambda}.
\]

Now $[M_2]$ has order $3$, so $M_2^3$ is scalar. Using
\[
M_2^2=\frac{1}{\lambda} M_2-\frac{1}{\lambda} I
\qquad
\text{we compute}
\qquad
M_2^3=\left(\frac{1}{\lambda^2}-\frac{1}{\lambda}\right)M_2-\frac{1}{\lambda^2}I.
\]
Since $M_2$ is not scalar, the coefficient of $M_2$ must vanish, hence
$\frac{1}{\lambda^2}-\frac{1}{\lambda}=0,$
so $\lambda=1$. Therefore 
$\operatorname{tr}(M_2)=\det(M_2)=1.$
\end{proof}
\begin{remark}
The proof of Proposition \ref{p.3 elements} is valid in every characteristic.
In characteristic $3$, the polynomial $x^2-x+1$ becomes $(x+1)^2$, so the
corresponding matrix is not semisimple; nevertheless its projective class still
has order $3$.
\end{remark}
\begin{remark}\label{r.3 elements}
Assume that the equivalent conditions of Proposition \ref{p.3 elements} hold, so
that $G_{\mathcal L}\cong C_3$. Then $M_2$ satisfies
$(M_2)^2-M_2+I=0.$

Suppose that $\mathcal L$ contains a fifth line, corresponding to another matrix
$M_3$. Since $[M_3]$ and $[M_3-I]$ both belong to
\[
G_{\mathcal L}=\{[I],[M_2],[(M_2)^2]\},
\]
a straightforward verification shows that necessarily either $M_3=M_2$ or
$M_3=I-M_2$. Thus the only
possible additional matrix is
$M_3=I-M_2.$

If $\operatorname{char}(K)\neq 2$, then
$M_2-M_3=2M_2-I$
is not projectively equivalent to any element of 
$C_3=\{[I],[M_2],[(M_2)^2]\},$ 
so this is impossible. Hence, in characteristic different from $2$, a
configuration with $G_{\mathcal L}\cong C_3$ can contain at most four lines.

In characteristic $2$, however, one has
\[
I-M_2=(M_2)^2
\qquad\text{and}\qquad
M_2-(M_2)^2=I,
\]
so a configuration with five lines and group $G_{\mathcal L}\cong C_3$ may
occur. Precisely, in this case the five associated matrices are
\[
M_0,\qquad M_\infty,\qquad M_1=I,\qquad M_2,\qquad M_3=I-M_2.
\]

In particular, no further matrix can be added while keeping
$G_{\mathcal L}\cong C_3$. Thus a configuration with group $C_3$ contains at
most four lines if $\operatorname{char}(K)\neq 2$, and at most five lines if
$\operatorname{char}(K)=2$.
\end{remark}
\begin{example}
In characteristic $2$, an explicit example of a configuration with five lines
and group $G_{\mathcal L}\cong C_3$ is obtained by taking
\[
M_2=
\begin{pmatrix}
0 & 1\\
1 & 1
\end{pmatrix}.
\]
Then 
$\operatorname{tr}(M_2)=\det(M_2)=1,$
so Proposition \ref{p.3 elements} gives
$G_{\mathcal L}\cong C_3.$ 
Moreover,
\[
M_2^2=
\begin{pmatrix}
1 & 1\\
1 & 0
\end{pmatrix}
=I-M_2,
\]
and therefore the five associated matrices are
\[
M_0=0,\qquad M_\infty,\qquad M_1=I,\qquad
M_2=
\begin{pmatrix}
0 & 1\\
1 & 1
\end{pmatrix},
\qquad
M_3=
\begin{pmatrix}
1 & 1\\
1 & 0
\end{pmatrix}.
\]
Since
\[
M_2-I=M_3,\qquad M_3-I=M_2,\qquad M_2-M_3=I,
\]
all relevant differences are invertible, and hence these matrices define a
configuration of five skew lines with $G_{\mathcal L}\cong C_3$. For a suitable choice of $p$, the corresponding groupoid orbit yields a $(3,5)$-geproci set supported on five skew lines, as in  \cite[Theorem 1]{kettinger2023}. 
\end{example}

\begin{remark}
Assume that the equivalent conditions of Proposition \ref{p.3 elements} hold,
that $\operatorname{char}(K)\neq 2$, and hence that $\mathcal L$ consists of
exactly four lines. Then, for any point $p$ on one of the four lines of $\mathcal L$ and outside the
two transversals, the stabilizer of $p$ in $G_{\mathcal L}$ is trivial. Hence the
$G_{\mathcal L}$-orbit of $p$ on each line has cardinality~$3$.

Since the groupoid $C_{\mathcal L}$ identifies the four lines of the configuration,
the full groupoid orbit $[p]_{\mathcal L}$ consists of $3$ points on each of the
four lines. In particular, $[p]_{\mathcal L}$ is a $[3,4]$-half grid geproci set known as the configuration $D_4$. The fact that, in characteristic 0, the configuration $D_4$ is the only nontrivial $(3,b)$-geproci set was proved in \cite[Theorem 4.10]{politus1}.
\end{remark}

\section{Non-abelian case}\label{s.non-abelian}
In this section we study the non-abelian case.  We first prove that no dihedral group $D_n$ with $n\geq 3$ can occur as $G_{\mathcal L}$ in non-modular characteristic, and then construct explicit configurations realizing the remaining polyhedral groups.

\begin{remark}
In order for $G_{\mathcal L}$ to be non-abelian,
the configuration $\mathcal L$ must contain at least five lines. Indeed,
we have already seen that for four lines 
$\{L_0,L_\infty,L_1,L_2\}$ the group $G_{\mathcal L}$ is always abelian.
Thus any non-abelian example requires at least one further line $L_3$,
whose associated transformations do not commute with those coming from
$L_2$.
\end{remark}

\begin{remark}\label{rem:finite-order-PGL2}
Let $K$ be an algebraically closed field of characteristic $p\ge 0$, and let
$g\in \PGL_2(K)$ be an element of finite order $n$.

Over an algebraically closed field, every finite-order element of
$\PGL_2(K)$ is, up to conjugation, either semisimple or unipotent. In
characteristic $p>0$, a nontrivial unipotent element is represented by a
matrix $U=I+N$, with $N^2=0$, and satisfies
\[
U^k=(I+N)^k=I+kN.
\]
Hence $U^k\neq I$ for $1\le k<p$, while $U^p=I$. Thus every nontrivial
unipotent element of $\PGL_2(K)$ has order exactly $p$.

It follows that:
\begin{itemize}
\item If $p>0$ and $p\mid n$, then necessarily $n=p$, and $g$ is
represented by a nontrivial unipotent element.

\item If $p=0$, or if $p>0$ and $p\nmid n$, then $g$ is semisimple. More
precisely, after a change of basis, we may assume
\[
g=
\left[
\begin{pmatrix}
\lambda & 0\\
0       & 1
\end{pmatrix}
\right],
\]
where $\lambda\in K^*$ is a primitive $n$-th root of unity.
\end{itemize}

The second case is the one used below for the order-$n$ cyclic subgroup of a
dihedral group $D_n$.
\end{remark}

\begin{theorem}\label{t.no Dn}
Let $n\ge 3$, and assume that $\operatorname{char}(K)\nmid n$.
    No dihedral group $D_n$ can occur as $G_{\mathcal L}.$ 
\end{theorem}
\begin{proof}
Suppose, by contradiction, that $G_{\mathcal L}\cong D_n$ for some
$n\ge3$ with $\operatorname{char}(K)\nmid n$. Recall
\[
D_n = \langle r,s \mid r^n = s^2 = 1,\ srs^{-1}=r^{-1} \rangle.
\]
Let $r,s\in G_{\mathcal L}$ be elements corresponding to these standard
generators, and choose representatives
\[
r = [A],\qquad s = [B],\qquad A,B\in\GL_2(K).
\]
From Remark~\ref{rem:finite-order-PGL2}, since $r$ has order $n\ge3$ and $\operatorname{char}(K)\nmid n$, after a change of basis,  we may assume
\[
  r = \left[
  \begin{pmatrix}
    \lambda & 0\\
    0       & 1
  \end{pmatrix}
  \right],
\]
with $\lambda\in K^*$ a primitive $n$-th root of unity.

We apply the same change of basis simultaneously to all matrices $M_i$, and we keep the notation $M_i$ for the transformed matrices. Since $M_0=0$ and $M_1=I$, the normalization $M_0=0$, $M_1=I$ is preserved. 
Moreover the matrices representing the maps $F_{ijk}$, and hence the closed
compositions generating $G_{\mathcal L}$, are merely conjugated in $\PGL_2$.

In this basis the eigenspaces of $r$ are $E_1=K\cdot(1,0)$ and $E_2=K\cdot(0,1)$.
The relation $[BAB^{-1}]=[A^{-1}]$ implies that $s$ permutes $\{E_1,E_2\}$, so
$B$ is (up to scalar) either diagonal or anti-diagonal in this basis. Hence
every element of $\langle r,s\rangle\cong D_n$ is represented in this basis
by a diagonal or an anti-diagonal matrix. There are two cases:
\begin{itemize}
  \item If $B$ fixes both $E_1$ and $E_2$, then in this basis $B$ is diagonal,
  and therefore commutes with $A$. This would give $srs^{-1}=r$, contradicting
  $srs^{-1}=r^{-1}\neq r$ for $n\ge3$.

  \item Therefore $B$ must swap $E_1$ and $E_2$, and in this same basis we may
  write
  \[
  B =
  \begin{pmatrix}
  0 & b\\
  c & 0
  \end{pmatrix},
  \qquad b,c\in K^*.
  \]
\end{itemize}
Thus, in a suitable basis of $K^2$, the rotation generator $r$ of $D_n$ is
represented by a diagonal matrix and the reflection generator $s$ by an
anti-diagonal matrix. It follows that every element of the subgroup
$\langle r,s\rangle\cong D_n$ is represented, in this basis, by either a diagonal or an anti-diagonal matrix: powers $r^k$ are diagonal, and elements $sr^k$ are anti-diagonal.

Since $G_{\mathcal L}=\langle r,s\rangle$, all generators must also be represented, in the chosen basis, by diagonal or anti-diagonal matrices.
By Corollary~\ref{c.describe G_L}, for each $i$ the classes
$[M_i]=[M_i-M_0]$ and $[I-M_i]=[M_1-M_i]$ belong to $G_{\mathcal L}$; therefore $M_i$ and $I-M_i$ must be represented by either diagonal or anti-diagonal matrices.
This forces $M_i$ to be diagonal. Indeed, if $M_i$ were anti-diagonal, then, since $M_i$ is invertible,
$I-M_i$ would have both nonzero diagonal and nonzero off-diagonal entries,
and hence would be neither diagonal nor anti-diagonal. Thus $M_i$ is diagonal.

Thus, since all $M_i$ are diagonal in the same basis, all differences $M_i-M_j$ are diagonal. By Corollary~\ref{c.describe G_L}, the group $G_{\mathcal L}$ is generated by the classes $[M_i-M_j]$. Hence all generators commute, so $G_{\mathcal L}$ is abelian. This contradicts our assumption that $G_{\mathcal L}\cong D_n$ is non-abelian.
\end{proof}
\begin{remark}
The hypothesis $\operatorname{char}(K)\nmid n$ is necessary for the argument
above. Indeed, if $\operatorname{char}(K)=p>0$ and $n=p$, then the cyclic
subgroup $C_p$ of $D_p$ can be represented by a unipotent element rather
than by a semisimple one. For example, the transformations
\[
r:t\mapsto t+1,
\qquad
s:t\mapsto -t
\]
satisfy $r^p=s^2=1$ and $srs^{-1}=r^{-1}$, and hence generate a subgroup of
$\PGL_2(K)$ isomorphic to $D_p$. In particular,
Example~\ref{non-abelian affine} realizes $D_3\cong S_3$ when $p=3$.

On the other hand, if $p\mid n$ and $n\neq p$, then $D_n$ cannot embed in
$\PGL_2(K)$, since $\PGL_2(K)$ has no element of order $n$ in this case.
\end{remark}

We now turn to explicit realizations in the non-abelian case. The examples in this section show that the realization problem is genuinely nontrivial beyond the abelian setting: while Theorem~\ref{t.no Dn} excludes non-modular dihedral groups, the three polyhedral groups $A_4$, $S_4$, and $A_5$ do occur as groups $G_{\mathcal L}$ of suitable configurations of skew lines. Thus the matrix description developed in Section \ref{s.lines<->matrices} is strong enough not only to rule out certain finite groups, but also to produce the classical non-abelian finite subgroups of $\PGL_2(K)$ in a concrete geometric way.

The following examples are obtained by starting from the classical
representations of the polyhedral groups in $\PGL_2(\mathbb C)$.  We choose
matrices $M_2$ and $M_3$ so that some of the classes
\[
[M_2],\quad [M_3],\quad [I-M_2],\quad [I-M_3],\quad [M_2-M_3]
\]
satisfy the standard presentations of $A_4$, $S_4$, and $A_5$.  The
invertibility of all differences $M_i-M_j$ guarantees that the corresponding
lines are pairwise skew.  The computations below then verify that the group
generated by all the difference classes is exactly the desired finite subgroup
of $\PGL_2(\mathbb C)$.

\begin{example}[$G_{\mathcal L}\cong A_5$]\label{ex.A5}
Work over $K=\mathbb C$ and set $i^2=-1$ and $\varphi=\frac{1+\sqrt{5}}{2}$.
Consider the matrices
\[
   M_2=\frac{1}{2} 
   \begin{pmatrix}
     1+i & 1+i\\
     -1+i & 1-i
   \end{pmatrix},
   \qquad
   M_3=\frac{1}{2} 
   \begin{pmatrix}
     \varphi+\varphi^{-1}i & 1\\
     -1 & \varphi-\varphi^{-1}i
   \end{pmatrix}.
\]
Let $\mathcal L$ be the set of lines corresponding to the matrices 
$\{M_{\infty},M_0,M_1,M_2,M_3\}.$ 
Let $r=[M_2]$ and $s=[M_3]\in \PGL_2(\mathbb C)$. 
Since $\det([M_2,M_3])\neq 0$ then $G_{\mathcal L}$ is  not abelian.

A direct computation, using Macaulay2, shows that
\[
  r^3 = s^5 = (rs)^2 = 1
\]
and the subgroup 
$|\langle r,s\rangle| = 60$.

By looking at the action on $\PP^1$ (for
instance via the size of a general orbit on $L_{\infty}$) we find that
$|G_{\mathcal L}| = |H| = 60.$ 
Thus
$G_{\mathcal L} \;\cong\; \langle r,s\rangle \;\cong\; A_5.$ 

For the associated configuration $\mathcal L$, the full groupoid orbit $[p]_{\mathcal L}$
across all five lines has cardinality
\[
|[p]_{\mathcal L}| =
\begin{cases}
  30 & \text{if } p=[0:0:0:1],\\[4pt]
  300 & \text{if } p \text{ is a general point on } L_{\infty}.
\end{cases}
\]
Thus, besides the general orbit on $L_\infty\cong \PP^1$, the action of
$G_{\mathcal L}$ has smaller distinguished orbits corresponding to points with
nontrivial stabilizer. For any such point $p\in L_\infty$, the associated
groupoid orbit $[p]_{\mathcal L}$ yields a half-grid geproci set.
\end{example}

\begin{example}[$G_{\mathcal L}\cong S_4$]\label{ex.S4}
Work over $K=\mathbb C$ and set $i^2=-1$. Consider the matrices
\[
M_2=\frac{1}{2}\begin{pmatrix}
    1+i & 1+i\\
    -1+i & 1-i
\end{pmatrix},
\qquad
M_3=\begin{pmatrix}
    0 & 1\\
    -1 & 0
\end{pmatrix}.
\]
Let $\mathcal L$ be the set of lines corresponding to the matrices 
$\{M_{\infty},M_0,M_1,M_2,M_3\}.$

A direct computation shows that
$\det\bigl([M_2,M_3]\bigr)=\det(M_2M_3-M_3M_2)=2,$
so by Lemma~\ref{l.transv 5 lines} these five lines have no common transversal
in characteristic $\neq 2$.

Let $r=[M_2-I]=\left[\frac{1}{2}\begin{pmatrix}
    -1+i & 1+i\\
    -1+i & -1-i
\end{pmatrix}\right],\, s=[M_2-I]\cdot [M_3-I]=\left[\begin{pmatrix}
    i & 1\\
    -1 & -i
\end{pmatrix}\right]\in\PGL_2(\mathbb C)$. One checks, using Macaulay2, that
the subgroup $H=\langle r,s\rangle$ has order 24 and 
presentation
\[
 r^3=s^2=(rs)^4=1,
\]
 By inspecting the action on $\PP^1$ (for
instance via the size of a general orbit) we find that 
$|G_{\mathcal L}| = |H| = 24,$ 
and hence $G_{\mathcal L}\cong S_4$.

For the associated configuration $\mathcal L$, the full groupoid orbit $[p]_{\mathcal L}$
across all five lines has cardinality
\[|[p]_{\mathcal L}|=\begin{cases}
120& \text{for a general point}\ p\ \text{on}\  L_{\infty};\\
40 & \text{for}\ p=[0:0:-1:(1-i)(1+\sqrt{3})/2];\\
30 & \text{for}\ p=[0:0:0:1].\\
 \end{cases}\]
In positive characteristic additional orbit degeneracies may occur.
\end{example}

\begin{example}[$G_{\mathcal L}\cong A_4$]\label{ex.A4}
Work over  $K=\mathbb C$ and let $\varepsilon$ be a primitive six-th root of unity. Consider the set $\mathcal L$ of the 5 lines corresponding to the matrices: 
    $$\left\{M_{\infty},\,M_0,\,M_1,\,M_2=\left(\!\begin{array}{cc}
\varepsilon&a\\
0&\varepsilon^{-1}
\end{array}\!\right),\,M_3=\left(\!\begin{array}{cc}
\varepsilon&0\\
a^{-1}&\varepsilon^{-1}
\end{array}\!\right)\right\},\ \ \ a\in K^*.$$

A direct computation shows that 
$
\det\bigl([M_2,M_3]\bigr)=\det(M_2M_3-M_3M_2)=2,
$ 
so by Lemma~\ref{l.transv 5 lines} these five lines have no common transversal
in characteristic $\neq 2$.

Let $r=[M_2],\, s=[M_2 M_3]=\left[\begin{pmatrix}
    \varepsilon & a\varepsilon^{-1}\\
    (a\varepsilon)^{-1}& \varepsilon^{-2}
\end{pmatrix}\right]\in\PGL_2(\mathbb C)$. 

A direct computation, using Macaulay2, shows that  the group $H=\langle r,s\rangle$ has order $12$ and  $s^2=r^3=(sr)^3=1$. Hence $H\cong A_4$. 
 By looking at the action on $\PP^1$ (for
instance via the size of a general orbit on $L_{\infty}$) we find that 
$|G_{\mathcal L}| = |H| = 12,$ 
and hence $G_{\mathcal L}\cong A_4$.

For the associated configuration $\mathcal L$, the full groupoid orbit $[p]_{\mathcal L}$
across all five lines has cardinality
\[|[p]_{\mathcal L}|=\begin{cases}
60& \text{for a general point}\ p\ \text{on}\  L_{\infty};\\
30 & \text{for}\ p=[0:0:-1:ia];\\
20 & \text{for}\ p=[0:0:0:1].\\
 \end{cases}\]
The orbit of $[0:0:0:1]$ on $L_{\infty}$, $[p]_{\mathcal L}\cap L_{\infty}$,  corresponds to the set of 4 equianharmonic points: $ \left(\infty,\,\frac{1}{\varepsilon a},\,0 ,\,-\frac{\varepsilon }{a}\right).$
\end{example}

\begin{example}[A non-abelian affine $p$-semi-elementary group]\label{non-abelian affine}
Let $K$ be an algebraically closed field of characteristic $p>2$, and
identify $\mathbb F_p\subset K$.  
Choose $a\in\mathbb F_p^*$ such that $a^2$ has order $p-1$ in $\mathbb F_p^*$, and set
\[
M_2=
\begin{pmatrix}
-1 & 1 \\
0  & -1
\end{pmatrix},
\qquad
M_3=
\begin{pmatrix}
a & 0 \\
0 & a^{-1}
\end{pmatrix}.
\]
Let $\mathcal L$ be the configuration 
$\mathcal L=\{L_\infty,L_0,L_1,L_2,L_3\}$ 
corresponding to the matrices $M_\infty,M_0,M_1,M_2,M_3$ as in Section~2.
 Since
$[M_2M_3]\neq [M_3M_2],$
the group $G_{\mathcal L}$ is non-abelian.

On the projective line $\PP^1_K$ with affine coordinate $t=x/y$, the
Möbius transformations induced by $[M_2]$ and $[M_3]$ are a nontrivial translation of order $p$, and a dilation by the scalar $a^2\in\mathbb F_p^*$ of order $p-1$, respectively:

\[
[M_2] : t \longmapsto \frac{-t+1}{-1} = t-1; \qquad
[M_3] : t \longmapsto \frac{at}{a^{-1}} = a^2 t.
\]
Thus
$\langle [M_2]\rangle \cong C_p,$ $
\langle [M_3]\rangle \cong C_{p-1},$
and the group 
$H = \langle [M_2],[M_3]\rangle$
is the affine group generated by a translation and a dilation:
\[
H \;\cong\; C_p \rtimes C_{p-1},
\]
of order $p(p-1)$.

Now note that all matrices $M_i$ in this example are upper triangular, and so
are all differences $M_i-M_j$. Hence every generator $F_{ijk} =
(M_j-M_k)^{-1}(M_i-M_k)$ of $G_{\mathcal L}$ has the form
\[
t \longmapsto \alpha t + \beta,\qquad \alpha\in\mathbb F_p^*,\ \beta\in\mathbb F_p,
\]
and therefore lies in the affine subgroup $H$. Thus
$G_{\mathcal L} \subseteq H.$
On the other hand $[M_2],[M_3]\in G_{\mathcal L}$ by Lemma~\ref{l.describe f_ijk},
so $H\subseteq G_{\mathcal L}$ and hence $$G_{\mathcal L}=H\cong C_p\rtimes C_{p-1}.$$
This example shows that the matrix formalism developed in Section
\ref{s.lines<->matrices} captures not only the polyhedral finite subgroups of
$\PGL_2(K)$, but also the affine positive-characteristic part of the
classification. In particular, the fact that all matrices $M_i$ and all
differences $M_i-M_j$ are upper triangular forces every generator $F_{ijk}$ to
lie in the affine subgroup of $\PGL_2(K)$, giving a concrete geometric
realization of the non-abelian group $C_p\rtimes C_{p-1}$.

For a general point on $L_\infty$ the
stabilizer in $G_{\mathcal L}$ is trivial, so
\[
|[p]_{\mathcal L}\cap L_\infty| = |G_{\mathcal L}| = p(p-1).
\]
The orbit of the point $[0:0:0:1]\in L_{\infty}$ under $G_{\mathcal L}$
has size $p$ on $L_{\infty}$: indeed, the translation $t\mapsto t-1$
acts transitively on the $p$ points of the affine line, and dilations
preserve this orbit. On the other hand, the point
$[0:0:1:0]\in L_{\infty}$ corresponds to $t=\infty$ and is fixed by both
$[M_2]$ and $[M_3]$, so its orbit has size $1$.
\end{example}

We conclude with a brief remark connecting our examples to groups of Lie type and formulating some natural open questions.
\begin{remark}
The examples above are consistent with the well-known exceptional isomorphisms
\[
\PSL_2(\mathbb F_3)\cong A_4,\qquad
\PGL_2(\mathbb F_3)\cong S_4,\qquad
\PSL_2(\mathbb F_4)\cong A_5.
\]
In this sense, the groups realized in Examples \ref{ex.A4}, \ref{ex.S4}, and
\ref{ex.A5} may also be viewed as the first instances of groups of Lie type
appearing in the geometry of skew-line configurations.

This suggests a broader realization problem: which finite subgroups of
$\PGL_2(K)$ can occur as $G_{\mathcal L}$ for a configuration of skew lines in
$\PP^3$? In particular, it would be interesting to understand whether further
groups of Lie type, such as $\PSL_2(\mathbb F_q)$ or $\PGL_2(\mathbb F_q)$ for
larger $q$, can arise from suitable configurations.
\end{remark}

\section*{Computational verification}

The following Macaulay2 code was used to compute orbits of points under the geometric maps $f_{ijk}$. The functions implement the construction directly: given a point $P\in L_i$, the plane spanned by $P$ and another line $L_j$ is intersected with a third line to obtain the image point. The final block is a parameterized example with four lines; it can be specialized to the examples in the paper by assigning the corresponding matrices.

\subsection*{Computing the orbit via the geometric construction}\

{\footnotesize \noindent{\it Check if the lines in a set are skew.}
\begin{verbatim}
isSkew = (L1,L2) -> (
    inter = saturate(L1+L2);
    Linter := select(flatten entries gens inter, f -> degree f == {1});
    if #Linter == 0 then return true else  return false
    );

checkLL = (LL) -> (
    for i from 0 to #LL-2 do (
        for j from i+1 to #LL-1 do (
            if isSkew(LL#i, LL#j)==false then (return false))
        ); return true
    );
\end{verbatim}

\noindent{\it Compute the image of a point under the action of all the lines in a set. If the orbit is finite, we have to run \texttt{orbitL} until the set stabilizes.}
\begin{verbatim}
planeSpan = (P,L) -> (
    I := intersect(P,L);
    Lgens := select(flatten entries gens I, f -> degree f == {1});
    ideal mingens ideal Lgens
    );

newPoint = (P,Li,Lj) -> (saturate(planeSpan(P,Lj) + Li));

memberPoint = (P, plist) -> (any(plist, Q -> isSubset(P,Q) and isSubset(Q,P)));

orbit = (P0, LL) -> ( points = {P0};
    for i from 0 to #LL-1 do ( Li = LL#i;
        if ((Li + P0) == ideal(x,y,z,w)) then
        for j from 0 to #LL-1 do ( Lj = LL#j;
            if (i!=j and (Lj + P0) == ideal(x,y,z,w)) then(
                Pnew=newPoint(P0,Li,Lj);
                if not memberPoint(Pnew, points) then points = append(points,Pnew);
                ))); points
    );

orbitL =(LP,LL,START) ->( points = LP;
    for k from START to #LP-1 do(
        P0=LP#k;
        for i from 0 to #LL-1 do ( Li = LL#i;
            if ((Li + P0) == ideal(x,y,z,w)) then
            for j from 0 to #LL-1 do ( Lj = LL#j;
                if (i!=j and (Lj + P0) == ideal(x,y,z,w)) then(
                    Pnew=newPoint(P0,Li,Lj);
                    if not memberPoint(Pnew, points) then points = append(points,Pnew);
                    )))); points
    );
\end{verbatim}

\noindent{\it Line from matrix $x$ is $M_{\infty}$; from $0$ is $M_0$}
\begin{verbatim}
MtoL = (M) -> (if M==x then ideal(x,y)  else ideal(M*matrix{{x},{y}}-matrix{{z},{w}}))
\end{verbatim}
\noindent{\it Computing the orbit of the point P0 over 4 lines and a  general matrix.}
\begin{verbatim}
K = frac(QQ[a,b,c,d]);
R= K[x,y,z,w]

MM={x, 0, matrix{{1,0},{0,1}},  matrix{{a,b},{c,d}}}
LL=toList apply(0..#MM-1, i-> MtoL(MM_i)); #LL
checkLL(LL)

P0=ideal (sort{x,y,z+w})

ids0=orbitL({P0}, LL,0); #ids0
ids1=orbitL(ids0, LL, 1); #ids1
ids2=orbitL(ids1, LL, #ids0); #ids2
ids3=orbitL(ids2, LL, #ids1); #ids3 
\end{verbatim}
}

\end{document}